\theoremstyle{plain}
\newtheorem{theorem}{Theorem}
\newtheorem{corollary}[theorem]{Corollary}
\newtheorem{lemma}[theorem]{Lemma}
\newtheorem{proposition}[theorem]{Proposition}
\theoremstyle{definition}
\newtheorem{definition}[theorem]{Definition}
\newtheorem{construction}[theorem]{Construction}
\newtheorem{reminder}[theorem]{Reminder}
\newtheorem{example}[theorem]{Example}
\theoremstyle{remark}
\numberwithin{theorem}{section} 
\newenvironment{ii}
{ \begin{enumerate}}
{\end{enumerate}}
\newcommand{\Spec}[1]{\mbox{\rm{Spec}}(#1)}
\newcommand{\Add}{\mbox{\rm{Add\,}}}
\newcommand{\Gen}{\mbox{\rm{Gen\,}}}
\newcommand{\Cogen}{\mbox{\rm{Cogen\,}}}
\newcommand{\Soc}[1]{\mbox{\rm{Soc}} \,#1}
\newcommand{\im}{\mbox{\rm{Im\,}}}
\newcommand{\Hom}[3]{\mbox{\rm{Hom}}_{#1}(#2,#3)}
\newcommand{\Ext}[4]{\mbox{\rm{Ext}}^{#1}_{#2}(#3,#4)}
\newcommand{\Tra}[1]{\mbox{\rm{Tr}}#1}
\newcommand{\rmod}[1]{\mbox{\rm{Mod}--}{#1}}
\newcommand{\ModR}{\text{Mod-}R}
\newcommand{\RMod}{R\text{-Mod}}
\newcommand{\Rmod}{R\text{-mod}}
\newcommand{\ProjR}{\text{Proj-R}}
\newcommand{\projR}{\text{proj-R}}
\newcommand{\Rproj}{\text{R-proj}}
\newcommand{\Mor}[1]{\mbox{\rm{Mor}}(#1)}
\newcommand{\Div}[1]{\mbox{\rm{Div}-}{#1}}
\newcommand{\Ann}[1]{\mbox{\rm{Ann}}(#1)}
\newcommand{\Ker}[1]{\mbox{\rm{Ker}}\,#1}
\newcommand{\Coker}{\mbox{\rm{Coker}}}
\newcommand{\C}{\mathcal{C}}
\newcommand{\Z}{\mathbb{Z}}
\title{Silting modules over commutative rings}
\author{Lidia Angeleri H\"{u}gel}
\email{lidia.angeleri@univr.it}
\author{Michal Hrbek}
\email{hrbmich@gmail.com}
\thanks{The first named author is partially supported by Fondazione Cariparo, Progetto di Eccellenza ASATA. The second named author is partially supported by the Grant Agency of the Czech Republic under the grant no.~14-15479S. Preliminary version of \today} 
\begin{document}
\maketitle
\begin{abstract}
Tilting modules over commutative rings were  recently classified in \cite{H}: they correspond  bijectively to faithful Gabriel topologies of finite type. In this note we  extend this classification by dropping faithfulness. The counterpart of an arbitrary Gabriel topology of finite type is obtained by replacing tilting with the more general notion of a silting module. 
\end{abstract}
\section{Introduction}
Silting modules were introduced in \cite{AMV} as a common generalisation of tilting modules and of the support $\tau$-tilting modules from \cite{AIR}. They  are in bijection with 2-term silting complexes and with certain t-structures and co-t-structures in the derived module category. For certain rings, they are also known to parametrize universal localisations  and wide subcategories of finitely presented modules  \cite[Theorem 4.5]{MS1},\cite[Corollary 5.15]{AMV2}.

In this note, we give a classification of silting modules over commutative rings, establishing a bijective correspondence with Gabriel filters of finite type. This extends the results in \cite{H} from the tilting to the silting case, and it is  a further piece of evidence for  the close relationship between silting modules and localisation theory. 

Our result  is achieved by investigating the dual notion of a cosilting module recently introduced in \cite{BP} as a  generalisation of cotilting modules. 
Indeed, the dual of a silting module $T$ is a cosilting module $T^+$, and there is a duality between the modules in the silting class $\Gen T$ and the  cosilting class $\Cogen T^+$. When $R$ is commutative, $\Cogen T^+$ turns out to be the torsionfree class of a hereditary torsion pair of finite type. We can thus interpret the modules in $\Cogen T^+$ as the $\mathcal{G}$-torsionfree modules with respect to a Gabriel filter of finite type $\mathcal{G}$. The silting class $\Gen T$ is then the class of $\mathcal{G}$-divisible modules. This defines a map assigning a  Gabriel filter $\mathcal{G}$ to every  silting class $\Gen T$. We show that this assignment is a bijection by constructing explicitly, for any   $\mathcal{G}$, a silting module $T$ which generates the $\mathcal{G}$-divisible modules (Construction~\ref{CC00}). We also provide a construction for a cosilting module cogenerating the $\mathcal{G}$-torsionfree modules (Construction~\ref{coscon}).

In general, not all cosilting modules  arise as duals of silting modules. This is a phenomenon that already occurs for cotilting modules \cite{B}, see Example \ref{E00} for a cosilting example. If $R$ is a commutative noetherian ring, however, our classification yields  bijections between silting classes, cosilting classes, Gabriel filters, and subsets of $\Spec{R}$ closed under specialisation (Theorem~\ref{classcosilting}). This generalises the classification of tilting and cotilting modules in \cite[Theorem 2.11]{APST}.

In fact,  silting and cosilting classes are in bijection also over non-commutative noetherian rings. As a consequence, every definable torsion class of right modules  over a left noetherian ring  is generated by a silting module (Corollary~\ref{coftnoe}). Finally,  extending a result from \cite{BHPST}, we show that the only silting torsion pair of finite type over a commutative ring is the trivial one  (Proposition~\ref{P11}).

The note is organized as follows. In Section 2 we investigate a finiteness condition which is shown  to hold for silting classes, recovering  a recent result from \cite{MS}.  Section 3  is devoted to the duality between silting and cosilting classes. In Sections 4 and 5 we turn to commutative rings and prove our classification results. In \ref{E00} we further exhibit an example showing that the inclusion of silting modules in the class of finendo quasitilting modules  proved in \cite[Proposition 3.10]{AMV} is proper.

\section{Definability and finite type}

Let $R$ be a ring, and let $\ModR$ (respectively, $\RMod$) denote the category of all right (respectively, left) $R$-modules. Denote by $\ProjR$ and $\projR$ the full subcategory of $\ModR$ consisting of all projective and all finitely generated projective modules, respectively. Given a subcategory $\mathcal{C}$ of $\ModR$, write $\Mor{\mathcal{C}}$ for the class of all morphisms in $\ModR$ between objects in $\mathcal{C}$, and denote $$\mathcal{C}^\perp=\{M \in \ModR \mid \Ext{1}{R}{C}{M}=0\}.$$

 Given a map $\sigma$  in
$\Mor{\ProjR}$, we are going to investigate the class 
$$\mathcal{D}_\sigma:=\{X\in \rmod R \,\mid\, \Hom{R}{\sigma}{X}\ \text{is surjective}\}.$$
We say that $\mathcal{D}_\sigma$ is of \emph{finite type} if it is determined by a set of  morphisms between finitely generated projective modules, i.~e.~there are $\sigma_i \in \Mor{\projR}, i\in I,$ such that $\mathcal{D}_\sigma = \bigcap_{i \in I} \mathcal{D}_{\sigma_i}$. As a shorthand, we say that $\sigma \in \Mor{\ProjR}$ is of \emph{finite type} if the class $\mathcal{D}_\sigma$ is of finite type.

Recall that a class is said to be \emph{definable}  if it is closed under direct limits, direct products and pure submodules.
We are going to  see that $\mathcal{D}_\sigma$ is definable if and only if it is of finite type.

\begin{lemma}
		\label{L00}
		Let $\sigma \in \Mor{\ProjR}$. Then $$\mathcal{D}_\sigma=(\Coker{\sigma})^\perp \cap \mathcal{D}_{\sigma'},$$ where $\sigma': P_{-1} \rightarrow \im{\sigma}$ is given by restricting the codomain of $\sigma$ to its image.
\end{lemma}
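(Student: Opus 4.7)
The plan is to exploit the canonical epi--mono factorisation $\sigma = \iota \circ \sigma'$, where $\sigma' \colon P_{-1} \twoheadrightarrow \im \sigma$ is the corestriction and $\iota \colon \im \sigma \hookrightarrow P_0$ is the inclusion, and to translate the statement into two elementary diagram chases. The short exact sequence
\[
0 \to \im \sigma \xrightarrow{\iota} P_0 \to \Coker \sigma \to 0
\]
combined with $\Ext{1}{R}{P_0}{X} = 0$ (projectivity of $P_0$) shows, via the long exact sequence of $\Hom{R}{-}{X}$, that $X \in (\Coker \sigma)^\perp$ \iff $\Hom{R}{\iota}{X}$ is surjective. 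This will be the translation of the first factor on the right-hand side.

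For the inclusion $(\Coker \sigma)^\perp \cap \mathcal{D}_{\sigma'} \subseteq \mathcal{D}_\sigma$, I take any $f \in \Hom{R}{P_{-1}}{X}$. Since $X \in \mathcal{D}_{\sigma'}$, there exists $h \in \Hom{R}{\im\sigma}{X}$ with $h \sigma' = f$. Since $X \in (\Coker \sigma)^\perp$, the map $\Hom{R}{\iota}{X}$ is surjective by the remark above, so there exists $g \in \Hom{R}{P_0}{X}$ with $g \iota = h$. Then $g \sigma = g \iota \sigma' = h \sigma' = f$, giving $X \in \mathcal{D}_\sigma$.

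For the reverse inclusion, assume $X \in \mathcal{D}_\sigma$. Given $f \in \Hom{R}{P_{-1}}{X}$, surjectivity of $\Hom{R}{\sigma}{X}$ provides $g \in \Hom{R}{P_0}{X}$ with $g \iota \sigma' = f$; then $h := g \iota$ satisfies $h \sigma' = f$, so $X \in \mathcal{D}_{\sigma'}$. To prove $X \in (\Coker \sigma)^\perp$, take $h \in \Hom{R}{\im\sigma}{X}$; set $f := h \sigma' \in \Hom{R}{P_{-1}}{X}$ and lift it through $\sigma$ to some $g \in \Hom{R}{P_0}{X}$ with $g \sigma = f$, i.e.\ $(g\iota)\sigma' = h \sigma'$. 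Because $\sigma'$ is an epimorphism it is right-cancellable, so $g \iota = h$, proving surjectivity of $\Hom{R}{\iota}{X}$.

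I do not anticipate any real obstacle: everything is formal from the factorisation $\sigma = \iota \sigma'$, the projectivity of $P_0$, and right-cancellability of the epimorphism $\sigma'$. The only point worth flagging is that $\sigma'$ does \emph{not} lie in $\Mor{\ProjR}$ (its codomain $\im \sigma$ need not be projective), so the symbol $\mathcal{D}_{\sigma'}$ must be read via its literal definition $\{X : \Hom{R}{\sigma'}{X} \text{ is surjective}\}$ rather than via the finite-type machinery of the previous paragraph; this is a notational rather than a mathematical subtlety.
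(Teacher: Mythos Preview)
Your proof is correct and takes essentially the same approach as the paper's. The paper's proof is a two-line sketch (``It is clear that $\mathcal{D}_{\sigma} \subseteq \mathcal{D}_{\sigma'}$; then for any $M \in \mathcal{D}_{\sigma'}$, a standard long exact sequence argument shows that $M \in \mathcal{D}_\sigma$ if and only if $\Ext{1}{R}{\Coker{\sigma}}{M}=0$''), and what you have done is precisely unpack that long exact sequence argument into explicit diagram chases via the factorisation $\sigma=\iota\sigma'$.
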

\begin{proof}
		It is clear that $\mathcal{D}_{\sigma} \subseteq \mathcal{D}_{\sigma'}$. Then for any $M \in \mathcal{D}_{\sigma'}$, a standard long exact sequence argument shows that $M \in \mathcal{D}_\sigma$ if and only if $\Ext{1}{R}{\Coker{\sigma}}{M}=0$, finishing the proof.
\end{proof}

\begin{lemma}
	\label{L01}
	Let $\sigma \in \Mor{\ProjR}$ be a map between projective modules. Then $\mathcal{D}_\sigma=\mathcal{D}_{\varphi}$, where $\varphi$ is a map between free modules.
\end{lemma}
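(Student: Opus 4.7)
The plan is to absorb the projective summands of $\sigma\dd P_{-1}\to P_0$ into a common free module by means of the Eilenberg swindle, and then to extend $\sigma$ by an identity on that complement; this extension is engineered so that the defining Hom-surjectivity condition is preserved.

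For each $i\in\{-1,0\}$, choose a free module $F_i$ of which $P_i$ is a direct summand, say $F_i=P_i\oplus Q_i$, and set $\Lambda_i:=F_i^{(\omega)}$. The standard swindle
\[
P_i\oplus\Lambda_i \;\cong\; P_i\oplus Q_i\oplus P_i\oplus Q_i\oplus\cdots \;\cong\; \Lambda_i
\]
shows that the free module $\Lambda_i$ absorbs $P_i$. Putting $\Lambda:=\Lambda_{-1}\oplus\Lambda_0$ yields a \emph{single} free module that admits isomorphisms $\alpha_i\dd P_i\oplus\Lambda\xrightarrow{\cong}\Lambda$ for both $i$, since $P_i\oplus\Lambda_{-1}\oplus\Lambda_0\cong\Lambda_{-1}\oplus\Lambda_0$. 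Define $\varphi\dd\Lambda\to\Lambda$ as the composite
\[
\Lambda \xrightarrow{\alpha_{-1}^{-1}} P_{-1}\oplus\Lambda \xrightarrow{\sigma\oplus\mathrm{id}_\Lambda} P_0\oplus\Lambda \xrightarrow{\alpha_0} \Lambda,
\]
which is indeed a morphism between free modules.

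Applying $\Hom{R}{-}{X}$ and invoking the canonical decomposition $\Hom{R}{P_i\oplus\Lambda}{X}\cong\Hom{R}{P_i}{X}\oplus\Hom{R}{\Lambda}{X}$, the map $\Hom{R}{\varphi}{X}$ is identified with $\Hom{R}{\sigma}{X}\oplus\mathrm{id}_{\Hom{R}{\Lambda}{X}}$. Since the identity summand is always surjective and surjectivity of a direct sum of maps is equivalent to surjectivity of each summand, $\Hom{R}{\varphi}{X}$ is surjective if and only if $\Hom{R}{\sigma}{X}$ is, giving $\mathcal{D}_\varphi=\mathcal{D}_\sigma$. The only mildly delicate point is arranging a common absorbing complement that serves both $P_{-1}$ and $P_0$ simultaneously; the swindle provides it, and the remainder is a routine Hom-calculation.
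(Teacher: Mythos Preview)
Your proof is correct, but it takes a different route from the paper's. The paper simply chooses a projective module $P'$ with $P_{-1}\oplus P'$ free and then a projective $P''$ with $P_0\oplus P'\oplus P''$ free, and sets $\varphi=\sigma\oplus\mathrm{id}_{P'}\oplus(0\to P'')$; the verification that $\mathcal{D}_\varphi=\mathcal{D}_\sigma$ is then the same Hom-splitting you use. Your Eilenberg swindle instead produces a single free module $\Lambda$ absorbing both $P_{-1}$ and $P_0$, so that $\varphi$ is an endomorphism of $\Lambda$. This is perfectly valid and arguably tidier in that the domain and codomain coincide, but it comes at a cost: your $\Lambda$ is always infinitely generated, whereas the paper's construction preserves finite generation of the $P_i$ when it holds. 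For the application in Theorem~\ref{T00} this makes no difference, but the paper's argument is the more elementary one and avoids invoking the swindle where a finite complement already does the job.
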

\begin{proof}
		Suppose that $\sigma: P_{-1} \rightarrow P_0$. Let $P'$ be a projective module such that $P_{-1} \oplus P$ is free, and let $P''$ be a projective module such that $P_0 \oplus P' \oplus P''$ is free. We then let $\varphi$ be the direct sum of the maps $\sigma: P_{-1} \rightarrow P_0, P' \xrightarrow{=} P'$, and $0 \xrightarrow{0} P''$. It is a routine check that $\mathcal{D}_\sigma = \mathcal{D}_{\varphi}$.
\end{proof}

\begin{theorem}\label{deft}
		\label{T00}
		Let $\sigma \in \Mor{\ProjR}$. Then the following are equivalent:
		\begin{enumerate}
			\item[(i)] 	$\mathcal{D}_\sigma$ is of finite type,		
			\item[(ii)] $\mathcal{D}_\sigma$ is definable.
		\end{enumerate}
\end{theorem}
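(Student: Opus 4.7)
\emph{Plan.} The implication (i)$\Rightarrow$(ii) will reduce to showing that $\mathcal{D}_\tau$ is definable for every $\tau \in \Mor{\projR}$: closure under direct limits and products is immediate because $\Hom{R}{P}{-}$ and $\Hom{R}{Q}{-}$ commute with them when $P, Q$ are finitely generated projective, and closure under pure submodules is the classical characterization of purity (a finite $R$-linear system over a submodule $Y \subseteq X$ solvable in $X$ is solvable in $Y$). Intersections of definable classes being definable, the hypothesis $\mathcal{D}_\sigma = \bigcap_i \mathcal{D}_{\sigma_i}$ at once gives definability of $\mathcal{D}_\sigma$.

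For (ii)$\Rightarrow$(i), Lemma~\ref{L01} lets me assume $\sigma \colon F_{-1} \to F_0$ is a map between free modules. Fix bases $\{b_i\}_{i \in I}$ and $\{e_j\}_{j \in J}$ and write $\sigma(b_i) = \sum_j r_{ij} e_j$. For each finite $I_0 \subseteq I$ choose a finite $J_0 \subseteq J$ containing the supports of the $\sigma(b_i)$, $i \in I_0$; regard $P_{I_0} = R^{I_0}$ and $Q_{J_0} = R^{J_0}$ as free summands of $F_{-1}$ and $F_0$, and let $\sigma_{I_0} \colon P_{I_0} \to Q_{J_0}$ be the restriction of $\sigma$, so that $\sigma_{I_0} \in \Mor{\projR}$. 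The inclusion $\mathcal{D}_\sigma \subseteq \bigcap_{I_0} \mathcal{D}_{\sigma_{I_0}}$ is routine: given $X \in \mathcal{D}_\sigma$ and $f \colon P_{I_0} \to X$, extend $f$ by zero to $F_{-1} \to X$, lift along $\sigma$ to $F_0 \to X$, and restrict to $Q_{J_0}$.

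The reverse inclusion is the crux, and I would first settle it for pure-injective $X$. Producing a lift $g \colon F_0 \to X$ of a given $f \colon F_{-1} \to X$ along $\sigma$ is the same as solving the system $\sum_j r_{ij} y_j = f(b_i)$, $i \in I$, in unknowns $(y_j)_{j \in J}$ in $X$; each equation has finite support, every finite subsystem (indexed by some $I_0$) is solvable in $X$ by hypothesis, and algebraic compactness of the pure-injective module $X$ then yields a global simultaneous solution, i.e.\ the desired $g$. For arbitrary $X$ in the intersection, note that $\bigcap_{I_0} \mathcal{D}_{\sigma_{I_0}}$ is itself definable by the already-proven (i)$\Rightarrow$(ii), hence closed under pure-injective envelopes, so the pure-injective envelope of $X$ lies in the intersection and therefore, by the pure-injective case, in $\mathcal{D}_\sigma$; closure of $\mathcal{D}_\sigma$ under pure submodules (part of the definability hypothesis) then forces $X \in \mathcal{D}_\sigma$.

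The chief obstacle I foresee is the pure-injective step: algebraic compactness is most often stated for systems in finitely many unknowns, whereas here the system has infinitely many unknowns $y_j$. The argument succeeds because each individual equation has finite support---the precise regime under which the infinite-system form of algebraic compactness for pure-injective modules still applies---while the remainder of the reduction uses only general stability properties of definable classes (closure under pure submodules and under pure-injective envelopes).
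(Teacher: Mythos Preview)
Your proof is correct, and the global architecture---reduce to maps between free modules, approximate by finite restrictions $\sigma_{I_0}$, prove $\mathcal{D}_\sigma \subseteq \bigcap \mathcal{D}_{\sigma_{I_0}}$ easily, and establish the reverse inclusion by reducing to pure-injective modules---matches the paper's. The tactical details, however, differ in two places.

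For (i)$\Rightarrow$(ii), the paper does not argue directly via purity. Instead it invokes Lemma~\ref{L00} to write $\mathcal{D}_\tau = \mathcal{D}_{\tau'} \cap (\Coker\tau)^\perp$ for $\tau \in \Mor{\projR}$, observes that $(\Coker\tau)^\perp$ is definable because $\Coker\tau$ is finitely presented, and then shows that $\mathcal{D}_{\tau'}$ is closed under \emph{all} submodules (not just pure ones) by a diagram chase. Your route---reduce $\tau$ to a matrix and read membership in $\mathcal{D}_\tau$ as solvability of a finite linear system, so that closure under pure submodules is literally the definition of purity and closure under direct limits follows from exactness of $\varinjlim$---is shorter and avoids Lemma~\ref{L00} altogether.

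For the pure-injective step in (ii)$\Rightarrow$(i), the paper again uses Lemma~\ref{L00}: it decomposes each $\mathcal{D}_{\sigma_i}$ as $\mathcal{D}_{\sigma_i'} \cap C_i^\perp$, uses $\Ext{1}{R}{\varinjlim C_i}{M} \cong \varprojlim \Ext{1}{R}{C_i}{M}$ for pure-injective $M$ to get $M \in C^\perp$, and takes an inverse limit of the isomorphisms $\Hom{R}{\sigma_i'}{M}$ to conclude $M \in \mathcal{D}_{\sigma'}$. Your argument bypasses this machinery by appealing directly to algebraic compactness: the lifting problem is a linear system with infinitely many unknowns but finite support in each equation, finitely solvable by hypothesis, hence globally solvable in any pure-injective module. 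This is the standard formulation of algebraic compactness (see e.g.\ Jensen--Lenzing or Prest), so your worry about ``infinitely many unknowns'' is unfounded; the finite-support-per-equation condition is exactly what is required. The upshot is that your proof never needs the decomposition $\mathcal{D}_\sigma = \mathcal{D}_{\sigma'} \cap (\Coker\sigma)^\perp$, at the cost of invoking the full strength of algebraic compactness rather than only the $\Ext$-vanishing consequence the paper uses.
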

\begin{proof}
		In the whole proof, let $\sigma: P_{-1} \rightarrow P_0$, and $C=\Coker{\sigma}$.

(i) $\rightarrow$ (ii): As an intersection of definable classes is a definable class, it is enough to show that $\mathcal{D}_\sigma$ is definable if $\sigma \in \Mor{\projR}$. By \cite[Lemma 3.9]{AMV}, $\mathcal{D}_\sigma$ is closed under direct products and epimorphic images, it is thus enough to show that it is closed under direct sums and pure submodules. By Lemma~\ref{L00}, we have that $\mathcal{D}_\sigma=\mathcal{D}_{\sigma'} \cap C^\perp$, where $\sigma': P_{-1} \rightarrow \im{\sigma}$ is $\sigma$ with codomain restricted to its image. As $C$ is finitely presented, the class $C^\perp$ is definable by \cite[Theorem 13.26]{GT}. We finish the proof by showing that $\mathcal{D}_{\sigma'}$ is closed under direct sums and submodules.

								Let $(M_i \mid i \in I)$ be a family of modules from $\mathcal{D}_{\sigma'}$, and $f: P_{-1} \rightarrow \bigoplus_{i \in I}M_i$ a map. As $P_{-1}$ is finitely generated, there is a finite subset $J \subseteq I$ such that $f$ factors through the direct summand $\bigoplus_{i \in J}M_i \simeq \prod_{i \in J}M_i$. Since $\mathcal{D}_{\sigma'}$ is clearly closed under products, $f$ factorizes through $\sigma'$.

								Let $M \in \mathcal{D}_{\sigma'}$ and $\iota: N \subseteq M$ be an inclusion. Applying $\Hom{R}{-}{\iota}$ on the exact sequence $0 \rightarrow K \rightarrow P_{-1} \xrightarrow{\sigma'} \im{\sigma} \rightarrow 0$ yields
								$$
									\begin{CD}
											0 @>>> \Hom{R}{\im{\sigma}}{M} @>\Hom{R}{\sigma'}{M}>> \Hom{R}{P_{-1}}{M} @>\varphi>> \Hom{R}{K}{M}  \\
										& & @AAA @AAA @A\theta AA & & \\
											0 @>>> \Hom{R}{\im{\sigma}}{N} @>\Hom{R}{\sigma'}{N}>> \Hom{R}{P_{-1}}{N} @>\psi>> \Hom{R}{K}{N}. \\
									\end{CD}
								$$
						By the assumption, the map $\varphi=0$, and thus $\theta\psi=0$. By left-exactness, all the vertical maps are injective, and therefore $\psi=0$, showing that $\Hom{R}{\sigma'}{N}$ is surjective. Therefore, $N \in \mathcal{D}_{\sigma'}$.

		(ii) $\rightarrow$ (i): Using Lemma~\ref{L01}, we can without loss of generality assume that $P_{-1}$ and $P_0$ are free modules. Fix a free basis $X$ of $P_{-1}$, and write the set $X$ as a direct union $X=\bigcup_{i \in I}X_i$ of its finite subsets, inducing a presentation of $P_{-1}$ as a direct union of direct summands $F_i = R^{(X_i)}$. Denote $G_i=\sigma(F_i)$. Fix a free basis $Y$ of $P_0$. As $G_i$ is finitely generated for each $i \in I$, there is a finite subset $Y'_i$ of $Y$ spanning $G_i$. Moreover, there are finite subsets $Y_i$ of $Y$ such that $Y'_i \subseteq Y_i$ for each $i \in I$, and $(Y_i \mid i \in I)$ forms a directed system. Indeed, we can find such sets by setting $Y_i=Y'_i \cup \bigcup_{j \leq i} Y'_j$. As this is clearly a finite union of finite sets, we have that $(Y_i \mid i \in I)$ is a directed system of finite subsets of $Y$. We let $F'_i=R^{(Y_i)}$, a free direct summand of $P_0$ for each $i \in I$. 

		The directed union $\bigcup_{i \in I}F'_i=R^{(\bigcup_{i \in I}Y_i)}$ is a direct summand of $P_0$, and the projection of the image of $\sigma$ onto the complement $R^{(Y \setminus \bigcup_{i \in I}Y_i)}$ is necessarily zero. Therefore, we can without loss of generality assume that $P_0=\bigcup_{i \in I}F'_i$. Let $\sigma_i: F_i \rightarrow F'_i$ be the restriction of $\sigma$ onto $F_i$, with codomain restricted to $F'_i$. We claim that $\mathcal{D}_\sigma \subseteq \mathcal{D}_{\sigma_i}$ for each $i \in I$. To prove this, let $M \in \mathcal{D}_\sigma$ and fix a map $f_i: F_i \rightarrow M$. As $F_i$ is a direct summand of $P_{-1}$, we can extend $f_i$ to a map $f: P_{-1} \rightarrow M$. As $M \in \mathcal{D}_\sigma$, there is a map $g: P_{0} \rightarrow M$ such that $f=g\sigma$. Let $g_i$ be the restriction of $g$ to $F'_i$. Then $f_i=g_i\sigma_i$, proving that $M \in \mathcal{D}_{\sigma_i}$. Denoting $\mathcal{D}=\bigcap_{i \in I}\mathcal{D}_{\sigma_i}$, we have $\mathcal{D}_\sigma \subseteq \mathcal{D}$.

		Finally, we show that $\mathcal{D}_\sigma$ is of finite type by proving $\mathcal{D} \subseteq \mathcal{D}_\sigma$. The class $\mathcal{D}_\sigma$ is definable by the assumption, and the definability of the class $\mathcal{D}$ is proved by implication $(i) \rightarrow (ii)$ of this Theorem. By \cite[Lemma 6.9]{GT}, it is enough to show that $M \in \mathcal{D}$ implies $M \in \mathcal{D}_\sigma$ for $M$ pure-injective. 
		
		Let $M \in \mathcal{D}$ be pure-injective. Denote  $C_i=\Coker{\sigma_i}$ for all $i \in I$. By Lemma~\ref{L00}, we have that $\Ext{1}{R}{C_i}{M}=0$ and $M \in \mathcal{D}_{\sigma_i'}$, where $\sigma'_i$ is given by restricting the codomain of $\sigma_i$ to $G_i$. Since $M$ is pure-injective, we have by \cite[Lemma 6.28]{GT} the following isomorphism:
		$$\Ext{1}{R}{C}{M} \simeq \Ext{1}{R}{\varinjlim_{i \in I} C_i}{M} \simeq \varprojlim_{i \in I} \Ext{1}{R}{C_i}{M}.$$
		This shows that $M \in C^\perp$. Applying $\Hom{R}{-}{M}$ to the exact sequence $F_i \xrightarrow{\sigma'_i} G_i \rightarrow 0$ we obtain that $\Hom{R}{\sigma'_i}{M}$ is an isomorphism for all $i \in I$. As inverse limit of a directed system of isomorphisms is an isomorphism, we obtain that 
		$$\varprojlim_{i \in I} \Hom{R}{\sigma'_i}{M} \simeq \Hom{R}{\varinjlim_{i \in I} \sigma'_i}{M} \simeq \Hom{R}{\sigma'}{M}$$
		is an isomorphism, where again $\sigma': P_{-1} \rightarrow G$ is given by restricting the codomain of $\sigma$ to its image $G=\bigcup_{i \in I}G_i$. In other words, $M \in \mathcal{D}_{\sigma'}$. As $M \in C^\perp \cap \mathcal{D}_{\sigma'}$, Lemma~\ref{L00} yields $M \in \mathcal{D}_{\sigma}$ as desired.
						\end{proof}

\section{Silting  and cosilting modules}
According to \cite{AMV}, an $R$-module $T$ is said to be {\it silting} if it admits a projective presentation  $P_{-1}\stackrel{\sigma}{\longrightarrow} P_0\longrightarrow T\to 0$ such that the class $\Gen T$ of $T$-generated modules coincides with the class $\mathcal{D}_\sigma$.
 The class $\Gen T$ is then called a {\it silting class}.

It is shown in \cite[3.5 and 3.10]{AMV} that silting classes are  definable torsion classes. From Theorem \ref{T00} we obtain that every silting class is of finite type. This reproves one implication in  a recent result due to  Marks and \v{S}\v{t}ov\'{i}\v{c}ek.
\begin{theorem}\label{ft} \cite{MS} A map $\sigma$ in $\Mor{\ProjR}$ is of finite type if and only if  the  class $\mathcal{D}_\sigma$  is a  silting class.
\end{theorem}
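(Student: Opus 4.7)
The plan is to combine the previous theorem with the theory of silting modules from~\cite{AMV} for the ``only if'' direction, and to invoke~\cite{MS} for the ``if'' direction, which requires an explicit construction I would not reproduce here.

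For the ``only if'' direction, I would suppose that $\mathcal{D}_\sigma$ is a silting class, so that $\mathcal{D}_\sigma = \Gen T$ for some silting $R$-module $T$. By \cite[3.5 and 3.10]{AMV}, every silting class is a definable torsion class, and therefore $\mathcal{D}_\sigma$ is in particular definable. Theorem~\ref{T00}, which has just identified definability of $\mathcal{D}_\sigma$ with finite type of $\sigma$, then yields immediately that $\sigma$ is of finite type. This is the implication the authors flag as ``reproved'', and it amounts essentially to a one-line synthesis of the two ingredients.

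For the ``if'' direction, assume that $\sigma$ is of finite type, so that $\mathcal{D}_\sigma = \bigcap_{i \in I} \mathcal{D}_{\sigma_i}$ for a family $\sigma_i \in \Mor{\projR}$. One must now exhibit a silting module whose silting class equals $\mathcal{D}_\sigma$. A natural candidate is the cokernel of $\tilde{\sigma} := \bigoplus_{i \in I} \sigma_i$, since $\mathcal{D}_{\tilde{\sigma}} = \mathcal{D}_\sigma$ and the finite presentation of each summand makes the relevant Ext-vanishing tractable against arbitrary direct sums. Verifying that such a module (or a suitable modification of it) is in fact silting with $\Gen T = \mathcal{D}_\sigma$ is precisely the content of~\cite{MS}, which I would simply cite rather than redo.

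The only genuine obstacle lies in the ``if'' direction, and this paper does not attempt to reprove it; the ``only if'' direction follows at once from Theorem~\ref{T00} and the AMV definability of silting classes, and that is the entire contribution to the proof made on these pages.
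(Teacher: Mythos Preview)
Your proposal is correct in substance and matches the paper's approach exactly: the paper gives no formal proof environment, but the paragraph preceding the theorem explains that silting $\Rightarrow$ finite type follows from \cite[3.5 and 3.10]{AMV} together with Theorem~\ref{T00}, while the converse is simply attributed to \cite{MS}. Note, however, that you have the direction labels reversed: in the biconditional ``$\sigma$ is of finite type if and only if $\mathcal{D}_\sigma$ is silting'', the implication silting $\Rightarrow$ finite type is the \emph{if} direction, and finite type $\Rightarrow$ silting is the \emph{only if} direction.
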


\smallskip

Let us now turn to the dual notion. 
Following \cite{BP}, an $R$-module $C$ is said to be {\it cosilting} if it admits an injective copresentation  $0\to C\longrightarrow E_{0}\stackrel{\sigma}{\longrightarrow} E_1$ such that the class $\Cogen C$ of $C$-cogenerated modules coincides with the class 
$$\mathcal{C}_\sigma:=\{X\in \rmod R \,\mid\, \Hom{R}{X}{\sigma}\ \text{is surjective}\}.$$ The class $\Cogen C$ is then called a {\it cosilting class}.

It is shown  in \cite{BP} that every cosilting module is pure-injective and that cosilting classes are definable torsionfree classes. 
In fact,  there is a duality between the silting classes in $\ModR$ and certain cosilting classes in $\RMod$ (see also \cite[3.7 and 3.9]{BP}). These cosilting classes will be characterized by  the property below.

\begin{definition}
		For any  map $\sigma \in \Mor{\ProjR}$, denote 
		$$\mathcal{T}_\sigma = \{X \in \RMod \mid \sigma \otimes_R X \text{ is injective}\}.$$
		Given a map $\lambda$ between injective left $R$-modules, we say that the class $\mathcal{C}_\lambda$ (or, the map $\lambda$) is of \emph{cofinite type}, if there is a set of maps $\sigma_i \in \Mor{\projR}, i \in I,$ such that $\mathcal{C}_\lambda=\bigcap_{i \in I} \mathcal{T}_{\sigma_i}$.
\end{definition}

Let us  investigate the duality. Assume that $R$ is a $k$-algebra over some commutative ring $k$. Given an $R$-module $M$, we denote by $M^+$ its dual with respect to an injective cogenerator of $\rmod k$, for example we can take $k=\Z$ and $M^+$ the character dual of $M$.
To every definable category $\mathcal{C}$ of
right (left) $R$-modules we can now associate a \emph{dual definable
category} of left (right) $R$-modules $\mathcal{C} ^\vee$
which is determined by the property that a module $M$ belongs to $\mathcal{C}$ if
and only if its dual module $M^+\in \mathcal{C} ^\vee$. This assignment defines a  bijection between definable subcategories of $\ModR$ and
 $\RMod$,  which restricts to a bijection between definable torsion classes  and definable torsionfree classes
 and  maps tilting classes to cotilting classes of cofinite type, see \cite[Propositions 5.4 and  5.7 and Theorem 7.1]{B}. We are now going to prove the analogous result for silting and cosilting classes.

\begin{lemma}\label{dual}		
	\begin{enumerate}
\item Let $\sigma \in \Mor{\ProjR}$. Then $\mathcal{T}_\sigma= \mathcal{C}_{\sigma^+}$, and a left $R$-module $X$ belongs to  $\mathcal{C}_{\sigma^+}$ if and only if $X^+\in \mathcal{D}_{\sigma}$.
\item If $\sigma \in \Mor{\ProjR}$ has finite type, then  
	 $\mathcal{D}_\sigma$ and $\mathcal{C}_{\sigma^+}$ are dual definable categories, and a right $R$-module $Y$  belongs to $ \mathcal{D}_\sigma$ if and only if $Y^+ \in \mathcal{C}_{\sigma^+}$.
\item A map $\lambda$  between injective left $R$-modules has cofinite type if and only if  there is a map $\sigma \in \Mor{\ProjR}$ of finite type such that $\mathcal{C}_\lambda=\mathcal{C}_{\sigma^+}$.	 
\end{enumerate}
		\end{lemma}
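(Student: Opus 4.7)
All three parts rest on the hom--tensor adjunction together with the fact that the character dual functor $(-)^+$ is faithfully exact. The basic computation is the natural isomorphism $\Hom{R}{X}{M^+} \simeq (M \otimes_R X)^+$ (for the appropriate sidedness), combined with the criterion that a linear map $f$ is injective if and only if $f^+$ is surjective.

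For part (1), applying the adjunction with $M = P_i$ the components of $\sigma$ and $X$ a left $R$-module gives $\Hom{R}{X}{\sigma^+} \simeq (\sigma \otimes_R X)^+$; this is surjective if and only if $\sigma \otimes_R X$ is injective, yielding $\mathcal{C}_{\sigma^+} = \mathcal{T}_\sigma$. Using instead the symmetric form of the adjunction, $\Hom{R}{P_i}{X^+} \simeq (P_i \otimes_R X)^+$, we obtain $\Hom{R}{\sigma}{X^+} \simeq (\sigma \otimes_R X)^+$, so $X^+ \in \mathcal{D}_\sigma$ if and only if $\sigma \otimes_R X$ is injective, equivalently $X \in \mathcal{C}_{\sigma^+}$.

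For part (2), I would apply part (1) to $X = Y^+$ to obtain $Y^+ \in \mathcal{C}_{\sigma^+}$ if and only if $Y^{++} \in \mathcal{D}_\sigma$. It then remains to prove $Y \in \mathcal{D}_\sigma$ if and only if $Y^{++} \in \mathcal{D}_\sigma$. This is where the finite-type hypothesis enters, through Theorem~\ref{T00}: the class $\mathcal{D}_\sigma$ is definable, hence closed under pure submodules, so the canonical pure embedding $Y \hookrightarrow Y^{++}$ gives one direction immediately. The reverse implication is a standard property of definable subcategories, namely that they are stable under the double character dual (equivalently, $Y$ and $Y^{++}$ satisfy the same pp-pairs, because $Y \hookrightarrow Y^{++}$ is pure and $Y^{++}$ is pure-injective). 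The ``dual definable categories'' assertion then follows: both $\mathcal{D}_\sigma$ and $\mathcal{C}_{\sigma^+}$ are definable and, by the above equivalences, each is determined by the other via $(-)^+$.

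For part (3), the forward direction proceeds by setting $\sigma := \bigoplus_i \sigma_i \in \Mor{\ProjR}$: this satisfies $\mathcal{D}_\sigma = \bigcap_i \mathcal{D}_{\sigma_i}$ (so $\sigma$ has finite type) and, using part (1) together with the compatibility of Hom and tensor with coproducts and products, $\mathcal{C}_{\sigma^+} = \mathcal{T}_\sigma = \bigcap_i \mathcal{T}_{\sigma_i} = \mathcal{C}_\lambda$. For the converse, given $\mathcal{C}_\lambda = \mathcal{C}_{\sigma^+}$ with $\mathcal{D}_\sigma = \bigcap_i \mathcal{D}_{\sigma_i}$ for some $\sigma_i \in \Mor{\projR}$, the duality established in part (2) transports this intersection to the left-module side, producing $\mathcal{C}_\lambda = \mathcal{T}_\sigma = \bigcap_i \mathcal{T}_{\sigma_i}$, so $\lambda$ has cofinite type. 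The principal technical obstacle is the equivalence $Y \in \mathcal{D}_\sigma \Leftrightarrow Y^{++} \in \mathcal{D}_\sigma$ used in part (2): this is the only step where finite type is genuinely needed, and where one must invoke the nontrivial model-theoretic stability of definable classes under double character duals.
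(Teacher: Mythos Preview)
Your proposal is correct and follows essentially the same approach as the paper: Hom--$\otimes$-adjunction for part~(1), the double-dual stability of the definable class $\mathcal{D}_\sigma$ for part~(2) (the paper simply cites \cite[3.4.21]{P} where you spell out the pure-embedding argument), and the direct-sum construction $\sigma=\bigoplus_i\sigma_i$ for part~(3). The only cosmetic difference is in the converse of part~(3): the paper observes directly via part~(1) that $\mathcal{C}_{\sigma^+}=\bigcap_i\mathcal{C}_{\sigma_i^+}=\bigcap_i\mathcal{T}_{\sigma_i}$ (since $X\in\mathcal{C}_{\sigma^+}\Leftrightarrow X^+\in\mathcal{D}_\sigma=\bigcap_i\mathcal{D}_{\sigma_i}\Leftrightarrow X\in\bigcap_i\mathcal{C}_{\sigma_i^+}$), whereas you route this through the duality of definable categories from part~(2); both arguments are valid.
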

\begin{proof}
(1),(2) By Hom-$\otimes$-adjunction, for any left $R$-module $X$ there is a commutative diagram linking the maps $\Hom{R}{X}{\sigma^+}$, $(\sigma\otimes_R X)^+$ and $\Hom{R}{\sigma}{X^+}$. This shows  that $X\in \mathcal{C}_{\sigma^+}$ if and only if $X^+\in \mathcal{D}_{\sigma}$, which in turn means that $(\sigma\otimes_R X)^+$ is surjective, or equivalently, $\sigma\otimes_R X$ is injective.

Furthermore, if $\sigma$ is of finite type, the definable class $\mathcal{D}_{\sigma}$ contains a right $R$-module $Y$ if and only if it contains its double dual $Y^{++}$, see e.g.~\cite[3.4.21]{P}. This implies that $Y\in \mathcal{D}_{\sigma}$ if and only if $Y^+\in \mathcal{C}_{\sigma^+}$.

(3)  Let $\sigma_i \in \Mor{\projR}, i \in I,$ be a set of maps such that $\mathcal{C}_\lambda=\bigcap_{i \in I}\mathcal{T}_{\sigma_i}$, and let $\sigma=\bigoplus_{i\in I} \sigma_i$. Then
		$\mathcal{C}_\lambda=\bigcap_{i \in I}\mathcal{T}_{\sigma_i}=\mathcal{T}_\sigma=\mathcal{C}_{\sigma^+}$ by (1), and
		 $\mathcal{D}_\sigma=\bigcap_{i \in I}\mathcal{D}_{\sigma_i}$, so the map $\sigma$ is of finite type.
		Conversely, if $\mathcal{C}_{\lambda}=\mathcal{C}_{\sigma^+}$ for a map $\sigma$  of finite type, there are maps $\sigma_i \in \Mor{\projR}, i \in I,$ such that $\mathcal{D}_\sigma=\bigcap_{i \in I}\mathcal{D}_{\sigma_i}$, and  
				$\mathcal{C}_{\lambda}=\mathcal{C}_{\sigma^+}=\bigcap_{i \in I}\mathcal{C}_{\sigma_i^+}=\bigcap_{i \in I}\mathcal{T}_{\sigma_i}.$
\end{proof}

\begin{proposition} \label{dualdefinable}
		Let $\sigma \in \Mor{\ProjR}$, and let  $T=\Coker{\sigma}$ be a silting module with respect to $\sigma$. Then $T^+$ is a cosilting left $R$-module  with respect to the injective copresentation   $\sigma^+$. Moreover, $\Gen T$ and $\Cogen T^+$ are dual definable classes, and $\Cogen T^+$ is a cosilting class of cofinite type.
		\end{proposition}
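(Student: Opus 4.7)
The plan is to combine Theorem~\ref{ft} with Lemma~\ref{dual} and the canonical pure embedding $X\hookrightarrow X^{++}$ available since $R$ is a $k$-algebra. First I would invoke Theorem~\ref{ft} to conclude that $\sigma$, presenting the silting module $T$, is of finite type, so that Lemma~\ref{dual} applies to $\sigma$. Dualizing the exact sequence $P_{-1}\xrightarrow{\sigma}P_0\to T\to 0$ then yields the left exact sequence $0\to T^+\to P_0^+\xrightarrow{\sigma^+}P_{-1}^+$ with both $P_i^+$ injective left $R$-modules, providing an injective copresentation of $T^+$ via $\sigma^+$.

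The heart of the argument is the equality $\Cogen T^+=\mathcal{C}_{\sigma^+}$. For $\Cogen T^+\subseteq\mathcal{C}_{\sigma^+}$, I would observe that $T\in\Gen T=\mathcal{D}_\sigma$, so Lemma~\ref{dual}(2) places $T^+$ in $\mathcal{C}_{\sigma^+}$; since the latter is definable and hence closed under products and submodules, every $T^+$-cogenerated module lies there. The reverse inclusion is the main technical point: given $X\in\mathcal{C}_{\sigma^+}$, Lemma~\ref{dual}(1) yields $X^+\in\mathcal{D}_\sigma=\Gen T$, so there exists a surjection $T^{(I)}\twoheadrightarrow X^+$ for some set $I$; dualizing produces an injection $X^{++}\hookrightarrow (T^{(I)})^+\cong (T^+)^I$, and precomposing with the canonical pure monomorphism $X\hookrightarrow X^{++}$ embeds $X$ into a power of $T^+$, placing $X$ in $\Cogen T^+$.

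Once this equality is in hand, $T^+$ is cosilting with respect to $\sigma^+$ by definition; the duality between the definable classes $\Gen T=\mathcal{D}_\sigma$ and $\Cogen T^+=\mathcal{C}_{\sigma^+}$ is exactly the content of Lemma~\ref{dual}(2); and Lemma~\ref{dual}(3) immediately certifies that $\mathcal{C}_{\sigma^+}$ is of cofinite type. The only delicate point I anticipate is the side-switch from the right-module surjection $T^{(I)}\twoheadrightarrow X^+$ to a left-module embedding into a power of $T^+$, which is precisely what the pure embedding into the double dual is designed to handle.
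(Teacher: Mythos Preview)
Your proof is correct and follows essentially the same route as the paper's: establish $\Cogen T^+=\mathcal{C}_{\sigma^+}$ by showing each inclusion via Lemma~\ref{dual} and the pure embedding $X\hookrightarrow X^{++}$, then read off the remaining claims. One small imprecision: definability only gives closure under \emph{pure} submodules, not arbitrary ones; the fact that $\mathcal{C}_{\sigma^+}$ is closed under all submodules holds for a different reason (any class $\mathcal{C}_\lambda$ with $\lambda$ between injectives has this property, as the paper cites from \cite[3.5]{BP}), so you should adjust that justification.
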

\begin{proof}
We have to verify  $\Cogen T^+=\mathcal{C}_{\sigma^+}$.  The class $\mathcal{C}_{\sigma^+}$ is closed under submodules by \cite[3.5]{BP}, so for the inclusion $\subset$  it is enough to show that $\mathcal{C}_{\sigma^+}$ contains the direct product $(T^+)^\alpha$ for any cardinal $\alpha$. Notice that the definable class $\mathcal{D}_{\sigma}$ contains  $T^{(\alpha)}$. The claim then follows from  Lemma \ref{dual} as $T^{(\alpha)}\,^{+}\cong (T^+)^\alpha$.  
For the inclusion $\supset$, take $X\in\mathcal{C}_{\sigma^+}$. Then $X^+\
\in \mathcal{D}_{\sigma}=\Gen T$, so there is an epimorphism $T^{(\alpha)}\to X^+$ for some cardinal $\alpha$. This yields a monomorphism $X\hookrightarrow X^{++}\to (T^+)^\alpha$, showing that $X\in\Cogen T^+$.
\end{proof}

From  Theorem \ref{ft} and Lemma \ref{dual} we obtain  
\begin{corollary}\label{duality} The assignment $\Gen T\mapsto \Cogen T^+$ defines a 1-1-correspondence between silting classes  in $\ModR$ and cosilting classes of cofinite type in $\RMod$. \end{corollary}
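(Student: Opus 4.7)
The plan is to package together Theorem~\ref{ft}, Lemma~\ref{dual} and Proposition~\ref{dualdefinable}, which essentially do all the work. The map is defined by sending a silting class to its dual definable class (which happens to be $\Cogen T^+$), so the whole claim really reduces to checking that the duality between definable classes of Lemma~\ref{dual} restricts bijectively between the two indicated families.

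For well-definedness, let $T$ be a silting module with projective presentation $\sigma$ so that $\Gen T = \mathcal{D}_\sigma$. By Theorem~\ref{ft} the map $\sigma$ is of finite type, so Lemma~\ref{dual}(2) says that $\mathcal{D}_\sigma$ and $\mathcal{C}_{\sigma^+}$ are dual definable, and Proposition~\ref{dualdefinable} identifies $\mathcal{C}_{\sigma^+}$ with $\Cogen T^+$. Since dual definability is an invariant of the definable class $\Gen T$ itself (it does not depend on the choice of $\sigma$ or on the silting representative $T$ of the class), the assignment $\Gen T\mapsto \Cogen T^+$ is well defined. Lemma~\ref{dual}(3) simultaneously confirms that $\Cogen T^+$ is a cosilting class of cofinite type, so the map indeed lands in the target set.

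Injectivity is immediate from the fact that the passage $\mathcal{D}\mapsto \mathcal{D}^\vee$ between definable subcategories of $\ModR$ and $\RMod$ is a bijection: if $\Cogen T^+ = \Cogen T'^+$ then, taking duals, $\Gen T = \Gen T'$. For surjectivity, start with an arbitrary cosilting class of cofinite type $\Cogen C = \mathcal{C}_\lambda$. By Lemma~\ref{dual}(3), there exists $\sigma\in\Mor{\ProjR}$ of finite type with $\mathcal{C}_\lambda = \mathcal{C}_{\sigma^+}$. By Theorem~\ref{ft}, $\mathcal{D}_\sigma$ is a silting class, realized as $\Gen T$ for some silting module $T$ with a (possibly different) presentation $\tau$; since $\tau$ is also of finite type, Lemma~\ref{dual}(2) applied to both $\sigma$ and $\tau$ gives $\mathcal{C}_{\sigma^+} = \mathcal{C}_{\tau^+}$, which by Proposition~\ref{dualdefinable} equals $\Cogen T^+$. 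Hence $\Cogen C = \Cogen T^+$.

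The only step that requires any care is the compatibility between different projective presentations, i.e.\ making sure that the assignment really depends only on the silting class and not on the chosen $\sigma$ or $T$; this is what forces us to invoke both parts of Lemma~\ref{dual} together with the finite type conclusion of Theorem~\ref{ft}, and it is the small conceptual point the proof has to articulate cleanly. Everything else is a direct translation between the definable-duality language of Lemma~\ref{dual} and the silting/cosilting language of Proposition~\ref{dualdefinable}.
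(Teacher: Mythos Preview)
Your proof is correct and follows the same approach as the paper, which merely states that the corollary follows from Theorem~\ref{ft} and Lemma~\ref{dual} (with Proposition~\ref{dualdefinable} used implicitly, since it immediately precedes the corollary). You have simply made explicit the details the paper leaves to the reader, in particular the well-definedness check and the passage between the two presentations $\sigma$ and $\tau$ in the surjectivity argument; one small redundancy is that the cofinite-type conclusion for $\Cogen T^+$ is already contained in Proposition~\ref{dualdefinable}, so the extra appeal to Lemma~\ref{dual}(3) at that point is not needed.
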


We now give a criterion for a torsionfree definable class to be  of cofinite type.
\begin{lemma}\label{coft}
Let $\mathcal{U}$ be a set of finitely presented left $R$-modules, and let $(\mathcal{T},\mathcal{F})$ be the torsion pair in $\RMod$ generated by $\mathcal{U}$, that is, $\mathcal{F}=\{M\in \RMod\,\mid\, \Hom{R}{U}{M}=0 \text{ for all } U\in \mathcal{U}\}.$ Then $\mathcal{F}$ is a cosilting class of cofinite type.
\end{lemma}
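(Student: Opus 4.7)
The plan is to realize $\mathcal{F}$ as $\mathcal{C}_{\sigma^+}$ for a single map $\sigma$ of finite type, built out of projective presentations of the generators in $\mathcal{U}$, and then to read off the cosilting property from the duality established in this section.

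For each $U\in \mathcal{U}$, I would fix a projective presentation
$$Q_{-1}^U \xrightarrow{\tau_U} Q_0^U \longrightarrow U \longrightarrow 0$$
by finitely generated projective left $R$-modules and dualize it to
$$\sigma_U := \Hom{R}{\tau_U}{R}\colon \Hom{R}{Q_0^U}{R}\longrightarrow \Hom{R}{Q_{-1}^U}{R},$$
a morphism in $\Mor{\projR}$. The only nontrivial computation is then to translate the defining condition of $\mathcal{F}$ into a tensor-injectivity condition: the natural isomorphism $\Hom{R}{Q}{R}\otimes_R M \cong \Hom{R}{Q}{M}$ for finitely generated projective $Q$ identifies $\sigma_U\otimes_R M$ with $\Hom{R}{\tau_U}{M}$, so applying $\Hom{R}{-}{M}$ to the presentation of $U$ yields $\Hom{R}{U}{M}\cong \Ker{(\sigma_U\otimes_R M)}$. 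Hence $\Hom{R}{U}{M}=0$ if and only if $M\in \mathcal{T}_{\sigma_U}$, and
$$\mathcal{F}=\bigcap_{U\in \mathcal{U}}\mathcal{T}_{\sigma_U}.$$

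I would then assemble these into the single morphism $\sigma := \bigoplus_{U\in\mathcal{U}}\sigma_U\in \Mor{\ProjR}$. Since $\mathcal{D}_\sigma=\bigcap_U\mathcal{D}_{\sigma_U}$, the map $\sigma$ is of finite type, so by Theorem~\ref{ft} the class $\mathcal{D}_\sigma$ is a silting class; pick a silting module $T$ with $\Gen T=\mathcal{D}_\sigma$. Because tensoring commutes with direct sums, $\mathcal{T}_\sigma=\bigcap_U\mathcal{T}_{\sigma_U}=\mathcal{F}$, and Lemma~\ref{dual}(1) gives $\mathcal{F}=\mathcal{T}_\sigma=\mathcal{C}_{\sigma^+}$, which already witnesses the cofinite-type condition.

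To upgrade this to a cosilting description, Lemma~\ref{dual}(2) tells me that $\mathcal{D}_\sigma$ and $\mathcal{C}_{\sigma^+}=\mathcal{F}$ are dual definable categories, while by Proposition~\ref{dualdefinable} so are $\Gen T=\mathcal{D}_\sigma$ and $\Cogen T^+$. Since the dual definable category is uniquely determined by a definable category, these coincide: $\mathcal{F}=\Cogen T^+$. Thus $\mathcal{F}$ is a cosilting class, witnessed by the cosilting module $T^+$, and of cofinite type, in agreement with Corollary~\ref{duality}. The main obstacle in this plan is really just the Hom--tensor identification in the first step; once that is in place, the rest is direct assembly from the machinery developed in the previous sections.
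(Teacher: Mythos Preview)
Your proposal is correct and follows essentially the same route as the paper: dualize a finite projective presentation of each $U$ to obtain $\sigma_U\in\Mor{\projR}$, use the natural isomorphism $P^\ast\otimes_R M\cong\Hom{R}{P}{M}$ for $P\in\Rproj$ to identify $\Ker(\sigma_U\otimes_R M)$ with $\Hom{R}{U}{M}$, and conclude $\mathcal{F}=\bigcap_U\mathcal{T}_{\sigma_U}=\mathcal{C}_{\sigma^+}$. The only difference is that the paper states the cosilting conclusion in one breath, while you spell it out via Theorem~\ref{ft} and Proposition~\ref{dualdefinable}; this is just making explicit what the paper leaves to the reader.
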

\begin{proof}
For every $U\in\mathcal{U}$ we choose a projective presentation $\alpha_U\in\Mor\Rproj$, and we denote $\sigma_U=\alpha_U\,^\ast$ and $\sigma=\bigoplus_{U\in \mathcal{U}} \sigma_U$. Then, using that for any  $P\in\Rproj$ and any $X\in\RMod$  there is a   natural isomorphism $P^\ast\otimes_R X\cong \Hom{R}{P}{X}$, we see that $\mathcal{F}=\bigcap_{U \in \mathcal{U}} \mathcal{T}_{\sigma_U}=\mathcal{C}_{\sigma^+}$ is a cosilting class of cofinite type.
\end{proof}

\begin{corollary}\label{coftnoe}
If $R$ is a left noetherian ring, the definable torsionfree classes in $\RMod$
coincide with the  cosilting classes of cofinite type, and the assignment $\Gen T\mapsto \Cogen T^+$ defines a 1-1-correspondence between silting classes  in $\ModR$ and cosilting classes in $\RMod$.
Moreover,  the definable torsion classes in $\ModR$
coincide with the  silting classes.  
\end{corollary}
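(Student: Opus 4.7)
The plan is to deduce all three claims from one key lemma: over a left noetherian ring, every definable torsionfree class $\mathcal{F}$ in $\RMod$ is generated as a torsionfree class by a set of finitely presented modules. Once this is in hand, Lemma \ref{coft} yields that $\mathcal{F}$ is cosilting of cofinite type, and the excerpt already records that cosilting classes are always definable torsionfree, giving the first coincidence.

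To prove the key lemma, let $(\mathcal{T},\mathcal{F})$ be the associated torsion pair and set $\mathcal{U}=\mathcal{T}\cap\Rmod$. Since $R$ is left noetherian, every module in $\mathcal{U}$ is finitely presented. I would verify the characterization $\mathcal{F}=\{M\mid\Hom{R}{U}{M}=0 \text{ for all } U\in\mathcal{U}\}$: the inclusion from left to right is immediate, and for the converse I take $M$ with $\Hom{R}{U}{M}=0$ for all $U\in\mathcal{U}$ and write the torsion part $t(M)\in\mathcal{T}$ as a directed union of its finitely generated submodules $N_i$. By noetherianness each $t(N_i)$ is finitely generated, hence lies in $\mathcal{U}$, while $N_i/t(N_i)\in\mathcal{F}$. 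Definability (in particular, closure of $\mathcal{F}$ under direct limits) then gives $t(M)/\varinjlim t(N_i)\in\mathcal{F}\cap\mathcal{T}=0$, so $t(M)=\varinjlim t(N_i)$. The hypothesis $\Hom{R}{t(N_i)}{M}=0$ forces each inclusion $t(N_i)\hookrightarrow M$ to vanish, yielding $t(M)=0$ and $M\in\mathcal{F}$.

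The remaining assertions then follow by assembly. Corollary \ref{duality} already gives a bijection between silting classes in $\ModR$ and cosilting classes of cofinite type in $\RMod$; by the first claim the latter are exactly the cosilting classes, proving the second coincidence. Finally, silting classes are definable torsion by \cite[3.5, 3.10]{AMV}; conversely, a definable torsion class $\mathcal{T}$ in $\ModR$ has a dual definable torsionfree class $\mathcal{T}^\vee$ in $\RMod$ via the $(-)^+$-duality of the excerpt, which by the key lemma equals $\Cogen T^+=\mathcal{C}_{\sigma^+}$ for some $\sigma$ of finite type with cokernel a silting module $T$. By Lemma \ref{dual}(2) the dual of $\Gen T=\mathcal{D}_\sigma$ also equals $\mathcal{T}^\vee$, and the bijectivity of the duality forces $\mathcal{T}=\Gen T$, a silting class.

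The main obstacle is the key lemma: a general torsion class need not be closed under submodules, so one cannot simply pick a finitely generated torsion submodule of $t(M)$. The argument has to route through direct limits of finitely generated submodules of $t(M)$ and extract \emph{their} torsion parts, leaning crucially on both the noetherian hypothesis (to keep $t(N_i)$ finitely generated) and on closure of $\mathcal{F}$ under direct limits to collapse the difference $t(M)/\varinjlim t(N_i)$.
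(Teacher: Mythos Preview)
Your proof is correct and follows essentially the same route as the paper: reduce to showing that a definable torsionfree class $\mathcal{F}$ in $\RMod$ is determined by a set of finitely presented modules, apply Lemma~\ref{coft}, then use Corollary~\ref{duality} and the bijection between definable torsion and torsionfree classes from \cite[Proposition~5.7]{B} to handle the remaining claims. The only difference is that the paper outsources the key lemma to \cite[Lemma~4.5.2]{GT}, while you supply a direct elementary argument. (Your argument is slightly roundabout: once you observe $t(N_i)\in\mathcal{U}$ and $t(N_i)\hookrightarrow M$, the hypothesis $\Hom{R}{t(N_i)}{M}=0$ immediately gives $t(N_i)=0$, hence $N_i\in\mathcal{F}$, and then $t(M)=\varinjlim N_i\in\mathcal{F}\cap\mathcal{T}=0$; the intermediate step $t(M)=\varinjlim t(N_i)$ is not needed.)
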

\begin{proof}
Let  $(\mathcal{T},\mathcal{F})$ be a torsion pair in $\RMod$ with $\mathcal{F}$ being definable. By \cite[Lemma 4.5.2]{GT}, there is a torsion pair $(\mathcal{U},\mathcal{V})$ in $\Rmod$ such that $\mathcal{T}$ and $\mathcal{F}$ consist of the direct limits of modules in $\mathcal{U}$ and $\mathcal{V}$, respectively, and  $\mathcal{F}=\{M\in \RMod\,\mid\, \Hom{R}{U}{M}=0 \text{ for all } U\in \mathcal{U}\}$. Then $\mathcal{F}$ is a cosilting class of cofinite type by Lemma   \ref{coft}. In particular, every cosilting class is of cofinite type, and Corollary \ref{duality} yields the second statement. 

For the last statement, recall from \cite[Proposition 5.7]{B} that the bijection in Corollary \ref{duality} extends to a bijection between definable torsion classes and definable torsionfree classes. By the discussion above, if  $\mathcal{T}$ is a definable torsion class, its dual definable class $\mathcal{T}^+$ coincides with the dual definable class of a silting class. Now use that  the assignment is injective.
 \end{proof}

In general, a definable  torsion class need not be silting, cf.~Example \ref{E00}. As for the dual result, it was recently shown in \cite{WZ} that the definable torsionfree classes over an arbitrary ring are precisely the  cosilting classes. But in general these classes are not of cofinite type, see again Example \ref{E00}.

\section{Silting  classes over commutative rings}
In this section, we classify silting classes over commutative rings, proving that they coincide precisely with the classes of divisibility by sets of finitely generated ideals. 

The key to our classification are the following results relating cosilting modules of cofinite type with hereditary torsion pairs.
Recall that a torsion pair $(\mathcal{T},\mathcal{F})$ is {\it hereditary} if the torsion class $\mathcal{T}$ is closed under submodules, or equivalently, the torsionfree class $\mathcal{F}$ is closed under injective envelopes. Moreover, $(\mathcal{T},\mathcal{F})$ has {\em finite type} if $\mathcal{F}$ is closed under direct limits.

\smallskip

First of all, combining Lemma~\ref{coft} with \cite[Lemma 2.4]{H}, we obtain 
\begin{corollary}\label{hercos}
Let $R$ be a ring. If  $(\mathcal{T},\mathcal{F})$ is a hereditary torsion pair of finite type in $\RMod$, then $\mathcal{F}$ is a cosilting class  of  cofinite type. 
\end{corollary}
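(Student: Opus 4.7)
The proof should be a short combination of the two cited results. The plan is to recognise that a hereditary torsion pair of finite type in $\RMod$ is always generated, as a torsion pair, by a set of finitely presented modules, and then feed such a generating set into Lemma~\ref{coft}.

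First I would invoke \cite[Lemma 2.4]{H}, which asserts that a hereditary torsion pair $(\mathcal{T},\mathcal{F})$ of finite type in $\RMod$ is cogenerated by the injective envelopes of its finitely presented torsion objects; equivalently, it is generated (in the torsion-class sense) by the set $\mathcal{U}=\mathcal{T}\cap\lfmod{R}$ of finitely presented modules lying in $\mathcal{T}$. This gives the description
\[
\mathcal{F}=\{M\in\RMod\,\mid\,\Hom{R}{U}{M}=0\text{ for all }U\in\mathcal{U}\}.
\]

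Once this equality is in hand, I would conclude by applying Lemma~\ref{coft} to the set $\mathcal{U}$: it yields immediately that $\mathcal{F}$ is a cosilting class of cofinite type, realised by the map $\sigma=\bigoplus_{U\in\mathcal{U}}\sigma_U$ obtained from projective presentations of the modules in $\mathcal{U}$.

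I do not expect any serious obstacle here, since the two ingredients fit together almost verbatim. The only subtle point is making sure that the version of \cite[Lemma 2.4]{H} being cited really produces a \emph{set} (not a proper class) of finitely presented generators, so that Lemma~\ref{coft} can be applied; this is automatic because up to isomorphism there is only a set of finitely presented $R$-modules.
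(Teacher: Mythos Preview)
Your proposal is correct and follows exactly the approach of the paper: the authors simply state that the corollary is obtained by ``combining Lemma~\ref{coft} with \cite[Lemma 2.4]{H}'', which is precisely what you do. Your additional remark about the set-versus-class issue is a reasonable sanity check but not something the paper dwells on.
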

For a commutative ring, also the converse holds true.
\begin{lemma}\label{L35}
		Let $R$ be a commutative ring. Let $\lambda$ be a map between injective  $R$-modules. If $\mathcal{C}_\lambda$ is of cofinite type, then it is a torsionfree class in a  hereditary torsion pair of finite type.
		
		In particular, if $R$ is a commutative noetherian ring, a torsion pair has finite type if and only if it is hereditary.
\end{lemma}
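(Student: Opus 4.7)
My plan is to unpack the cofinite-type assumption $\mathcal{C}_\lambda = \bigcap_{i \in I} \mathcal{T}_{\sigma_i}$ with $\sigma_i \in \Mor{\projR}$ and verify, one by one, the closure properties required of a torsionfree class in a hereditary torsion pair of finite type. Closure of $\mathcal{C}_\lambda$ under submodules, direct products, and extensions (so that it is the torsionfree class of some torsion pair $(\mathcal{T}, \mathcal{C}_\lambda)$), and closure under direct limits (so that the torsion pair has finite type), hold over an arbitrary ring and should follow routinely from the exactness of $P \otimes_R -$ for $P$ finitely generated projective, the commutation of $P \otimes_R -$ with direct products and filtered colimits, and the snake lemma applied pointwise to $\sigma_i \otimes_R -$. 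The only step that genuinely requires commutativity of $R$ is closure of $\mathcal{C}_\lambda$ under injective envelopes, equivalent to hereditarity of the torsion pair, and I expect this to be the main obstacle.

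For this step it suffices to check that every $\mathcal{T}_\sigma$ with $\sigma \in \Mor{\projR}$ is closed under injective envelopes, since intersections inherit this property. Applying the obvious analog of Lemma~\ref{L01} (the construction there adds an identity and a zero summand, both of which tensor injectively), I reduce to $\sigma \colon R^n \to R^m$ a map between free modules of finite rank represented by an $m \times n$ matrix $A$, so that $\sigma \otimes_R X$ is identified with the map $X^n \to X^m$, $\bar x \mapsto A\bar x$. Given $X \in \mathcal{T}_\sigma$ with injective envelope $E$, suppose for contradiction that some $0 \neq \bar e = (e_1,\ldots,e_n) \in E^n$ satisfies $A\bar e = 0$. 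Since essentiality is preserved by finite direct sums, $X^n$ is essential in $E^n$, so the nonzero cyclic submodule $R\bar e \subseteq E^n$ meets $X^n$ nontrivially; I pick $r \in R$ with $0 \neq r\bar e \in X^n$. Commutativity of $R$ then yields $A(r\bar e) = r A\bar e = 0$, contradicting the injectivity of $\sigma \otimes_R X$. This essentiality argument is the crux of the proof and the place where commutativity is used unavoidably, through the identity $A(r\bar e) = r A\bar e$.

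For the ``in particular'' claim over commutative noetherian $R$, the direction hereditary $\Rightarrow$ finite type is immediate: hereditary torsion pairs in $\RMod$ correspond to Gabriel topologies, and every such topology has a basis of finitely generated ideals when all ideals of $R$ are finitely generated. For the converse, given a torsion pair $(\mathcal{T}, \mathcal{F})$ of finite type, I would invoke \cite[Lemma~4.5.2]{GT} to write $\mathcal{F} = \{M \in \RMod \mid \Hom{R}{U}{M} = 0 \text{ for all } U \in \mathcal{U}\}$ for a set $\mathcal{U}$ of finitely generated (hence, by noetherianity, finitely presented) $R$-modules. Lemma~\ref{coft} then shows that $\mathcal{F}$ is a cosilting class of cofinite type, and the first statement of the lemma yields that $(\mathcal{T}, \mathcal{F})$ is hereditary.
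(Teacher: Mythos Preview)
Your proof is correct and follows essentially the same approach as the paper's. The only cosmetic difference is that you reduce to free modules $\sigma\colon R^n\to R^m$ via the analog of Lemma~\ref{L01} before running the essentiality argument in matrix form, whereas the paper keeps $\sigma$ between finitely generated projectives and instead shows that $P_{-1}\otimes_R\iota$ is an injective envelope by complementing $P_{-1}$ to a finite-rank free module inside the argument; the decisive step (essentiality of $X^n\hookrightarrow E(X)^n$ combined with commutativity to pass the scalar $r$ through the matrix $A$) is identical in both. For the ``in particular'' clause the paper simply cites Corollary~\ref{coftnoe}, whose proof is exactly the \cite[Lemma~4.5.2]{GT}~$+$~Lemma~\ref{coft} argument you spell out.
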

\begin{proof}
		By assumption $\mathcal{C}_\lambda=\bigcap_{i \in I}\mathcal{T}_{\sigma_i}$ for a set of maps $\sigma_i \in \Mor{\projR}, i \in I$. It is then enough to prove the claim for each $\mathcal{T}_{\sigma_i}$, or in other words, we can assume w.l.o.g. that $\mathcal{C}_\lambda=\mathcal{T}_{\sigma}$ for some $\sigma\in \Mor{\projR}$. By Lemma~\ref{dual}, $\mathcal{T}_{\sigma}=\mathcal{C}_{\sigma^+}$ is a  definable category, which is closed under submodules and extensions by \cite[Lemma 2.3]{BP}, so it is a  torsion-free class closed under direct limits. It remains to show that is is also closed under injective envelopes.

Let $M \in \mathcal{T}_{\sigma}$, and consider the exact sequence induced by an injective envelope $0 \rightarrow M \xrightarrow{\iota} E(M) \rightarrow C \rightarrow 0$. Tensoring this sequence with $\sigma$ yields a commutative diagram
		$$
		\begin{CD}
				& & 0  & & 0  \\
				   & &	@VVV    @VVV \\
				0 @>>> P_{-1} \otimes_R M @>\sigma \otimes_R M>> P_0 \otimes_R M \\
				& & @VVP_{-1} \otimes_R \iota V    @VVV \\
				& & P_{-1} \otimes_R E(M) @>\sigma \otimes_R E(M)>> P_0 \otimes_R E(M). \\
		\end{CD}
		$$
		The exactness of the columns follows from the projectivity of $P_{-1},P_0$, while the exactness of the first row follows by definition of $ \mathcal{T}_{\sigma}$. Since $R$ is commutative, this is a commutative diagram in $\ModR$ (this is where we need the commutativity of $R$). First, we claim that the inclusion $P_{-1} \otimes_R \iota$ is an injective envelope of $P_{-1} \otimes_R M$. Indeed, let $P$ be a finitely generated projective such that $P_{-1} \oplus P \simeq R^n$ for some $n$. Then $(P_{-1} \oplus P) \otimes_R \iota = R^n \otimes_R \iota$ is essential by \cite[Proposition 6.17(2)]{AF}, and since $E(M)^n \simeq R^n \otimes_R E(M)$ is injective, it is an injective envelope of $M^n=R^n \otimes_R M$. As $R^n \otimes_R \iota = (P_{-1} \otimes_R \iota) \oplus (P \otimes_R \iota)$, we conclude that $P_{-1} \otimes_R \iota$ is an injective envelope of $P_{-1} \otimes_R M$.
		
		If $P_{-1} \otimes_R M$ is zero, then its injective envelope $P_{-1} \otimes_R E(M)$ is also zero, and thus $\sigma \otimes_R E(M)$ is injective. Towards a contradiction, suppose that $P_{-1} \otimes_R M$ is non-zero, and the kernel of $\sigma \otimes_R E(M)$ is non-zero.  By the essentiality of $P_{-1} \otimes_R \iota$, there is a non-zero element $x \in P_{-1} \otimes_R M$ such that $(\sigma \otimes_R E(M))(P_{-1} \otimes_R \iota)(x)=0$, which by a simple diagram chasing yields $(\sigma \otimes_R M)(x)=0$, a contradiction to $\sigma \otimes_R M$ being injective. Therefore, the kernel of $\sigma \otimes_R E(M)$ is zero. In both cases, we showed that $E(M) \in \mathcal{T}_{\sigma}$.
		
		The last statement follows from Corollary~\ref{coftnoe}.
\end{proof}

It is well known that hereditary torsion pairs  correspond bijectively to Gabriel filters. This will  allow to establish a correspondence between silting classes and Gabriel filters. We first review the relevant notions.
\begin{reminder}\label{reminder}	A filter  $\mathcal{G}$ of right ideals of $R$ is a \emph{(right) Gabriel filter}, if the following conditions hold true:
	\begin{enumerate}
			\item[(i)] if $I \in \mathcal{G}$, then for any $x \in R$ the ideal $(I : x) = \{r \in R \mid xr \in I\}$ belongs to $\mathcal{G}$,
			\item[(ii)] if $J$ is a right ideal such that there is $I \in \mathcal{G}$ with $(J : x) \in \mathcal{G}$ for all $x \in I$, then $J \in \mathcal{G}$.
	\end{enumerate}
	Further, $\mathcal{G}$ is of \emph{finite type} if it has a filter basis consisting of finitely generated ideals.
We remark that a filter of ideals of a commutative ring with a filter basis of finitely generated ideals is a Gabriel filter (of finite type) if and only if it is closed under ideal multiplication,
	see e.g. \cite[Lemma 2.3]{H}.

Every Gabriel filter $\mathcal{G}$ induces a hereditary torsion pair $(\mathcal{T}_\mathcal{G},\mathcal{F}_\mathcal{G})$ where $$\mathcal{F}_\mathcal{G}=\bigcap_{I \in \mathcal{G}} \Ker \Hom{R}{R/I}{-}$$ is the class of $\mathcal{G}$-{\em torsionfree modules}. The assignment $\mathcal{G}\mapsto (\mathcal{T}_\mathcal{G},\mathcal{F}_\mathcal{G})$ defines a bijection between Gabriel filters (of finite type) and hereditary torsion pairs (of finite type), see \cite[Chapter VI, Theorem 5.1, and Chapter XIII, Proposition 1.2]{St}.

 Given a Gabriel filter $\mathcal{G}$, we say that a module $M \in \ModR$ is $\mathcal{G}$-{\em divisible} if $ MI=M$ for all $I \in \mathcal{G}$. If $\Div{\mathcal{G}}$ denotes the class of $\mathcal{G}$-divisible modules, then     $$\Div{\mathcal{G}}=\bigcap_{I \in \mathcal{G}} \Ker ({-}\otimes_R{R/I}).$$
 \end{reminder}

 By Hom-$\otimes$ adjunction, a module $M$ is $\mathcal{G}$-divisible if and only if its dual  $M^+$ is $\mathcal{G}$-torsionfree
		  (cf.~\cite[Chapter VI, Proposition 9.2]{St}). So, if the  Gabriel filter  $\mathcal{G}$ is of finite type,   $\Div{\mathcal{G}}$ and $\mathcal{F}_{\mathcal{G}}$ are dual definable classes, and
		 it follows from Corollary \ref{hercos} that $\Div{\mathcal{G}}$ is a silting class.
	
	\smallskip	 
		 
Again,  in the commutative case, we also have  the converse.
\begin{proposition}
		\label{P01}
		Let $R$ be a commutative ring, and let $\sigma \in \Mor{\ProjR}$ be of finite type. Then there is a Gabriel filter of finite type $\mathcal{G}$ such that $\mathcal{D}_\sigma = \Div{\mathcal{G}}$.
\end{proposition}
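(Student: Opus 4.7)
The strategy is to transport $\mathcal{D}_\sigma$ to the cosilting (torsionfree) side via the duality of Section~3, then use commutativity (through Lemma~\ref{L35}) to identify the resulting class as the torsionfree class of a hereditary torsion pair of finite type, which is the same data as a Gabriel filter of finite type. Finally we transport back through the duality to conclude $\mathcal{D}_\sigma=\Div{\mathcal{G}}$, relying on the fact that the dual definable category assignment is injective.

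In more detail: since $\sigma$ has finite type, Theorem~\ref{T00} gives that $\mathcal{D}_\sigma$ is definable, hence (by Theorem~\ref{ft}) $T:=\Coker{\sigma}$ is silting with $\Gen T=\mathcal{D}_\sigma$. Proposition~\ref{dualdefinable} then produces the dual cosilting class $\Cogen T^+=\mathcal{C}_{\sigma^+}$ of cofinite type in $\RMod$ and asserts that $\Gen T$ and $\Cogen T^+$ are dual definable classes. Because $R$ is commutative, Lemma~\ref{L35} applies and exhibits $\mathcal{C}_{\sigma^+}$ as the torsionfree class of a hereditary torsion pair of finite type; the classical correspondence recalled in Reminder~\ref{reminder} then produces a (unique) Gabriel filter $\mathcal{G}$ of finite type with $\mathcal{F}_{\mathcal{G}}=\mathcal{C}_{\sigma^+}$.

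For the final step, the Hom-$\otimes$ adjunction discussed right after Reminder~\ref{reminder} shows that $\Div{\mathcal{G}}$ and $\mathcal{F}_{\mathcal{G}}$ are also dual definable classes. Hence $\Gen T=\mathcal{D}_\sigma$ and $\Div{\mathcal{G}}$ have the same dual definable class, namely $\mathcal{F}_{\mathcal{G}}=\Cogen T^+$. Since the dual definable category assignment is a bijection, this forces $\mathcal{D}_\sigma=\Div{\mathcal{G}}$, as required. The real content of the argument is packaged into Lemma~\ref{L35}, where commutativity is crucially used to ensure that tensoring with $P_{-1}$ preserves injective envelopes and hence that $\mathcal{C}_{\sigma^+}$ is closed under injective envelopes; once that is granted, everything else is formal manipulation of already-established dualities and correspondences.
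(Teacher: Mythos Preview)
Your argument is essentially the same as the paper's: pass to the dual class $\mathcal{C}_{\sigma^+}$, invoke Lemma~\ref{L35} to identify it with $\mathcal{F}_\mathcal{G}$ for some Gabriel filter $\mathcal{G}$ of finite type, and then transport back via the bijection between dual definable categories to obtain $\mathcal{D}_\sigma=\Div{\mathcal{G}}$. The paper's only cosmetic difference is that it first reduces to a single $\sigma\in\Mor{\projR}$ (combining the resulting Gabriel filters afterwards), whereas you work directly with the given $\sigma$ and appeal to the general machinery of Section~3.

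One imprecision worth flagging: Theorem~\ref{ft} only tells you that $\mathcal{D}_\sigma$ is \emph{a} silting class, i.e.\ equal to $\Gen T'$ for \emph{some} silting module $T'$ with \emph{some} presentation $\sigma'$; it does not assert that $T:=\Coker\sigma$ is silting with respect to the given $\sigma$. Consequently your appeal to Proposition~\ref{dualdefinable} for this particular $\sigma$ is not literally justified. This does not damage the argument, however: the two facts you actually need---that $\mathcal{D}_\sigma$ and $\mathcal{C}_{\sigma^+}$ are dual definable classes, and that $\mathcal{C}_{\sigma^+}$ is of cofinite type---follow directly from Lemma~\ref{dual}(2) and (3) for any $\sigma$ of finite type, without ever naming a silting module. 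With that small reroute the proof stands.
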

\begin{proof}
		By assumption $\mathcal{D}_\sigma=\bigcap_{i \in I}\mathcal{D}_{\sigma_i}$ for a set of maps $\sigma_i \in \Mor{\projR}, i\in I$. If  each $\mathcal{D}_{\sigma_i}=\Div{\mathcal{G}_i}$ for some Gabriel filter of finite type $\mathcal{G}_i$, then $\mathcal{D}_\sigma=\bigcap_{i \in I} \Div{\mathcal{G}_i}=\Div{\mathcal{G}}$, where $\mathcal{G}=\{J \subseteq R \mid I_1I_2\cdots I_n \subseteq J \text{ whenever }  I_1,I_2,\ldots,I_n \in \bigcup_{i \in I}\mathcal{G}_i \}$ is the smallest Gabriel filter of finite type containing $\mathcal{G}_i$ for all $i \in I$. So  we can again assume w.l.o.g. that  $\sigma\in \Mor{\projR}$.
		
		 In Lemma~\ref{L35} we showed that $\mathcal{C}_{\sigma^+}$ is a hereditary torsionfree class of finite type. So there is a Gabriel filter $\mathcal{G}$ of finite type such that $\mathcal{C}_{\sigma^+}=\mathcal{F}_\mathcal{G}$,
		which 
		amounts to
		$\mathcal{D}_\sigma=\Div{\mathcal{G}}$.
This completes the proof.\end{proof}

Combining the results above, we obtain the desired classification of silting classes over commutative rings. Here we give a direct proof by providing an explicit  construction for a silting module corresponding to a Gabriel filter of finite type.
It generalises the construction of a Fuchs-Salce tilting module in \cite{H}.
 
\begin{construction}
		\label{CC00}
		Let $R$ be a commutative ring and $\mathcal{G}$ a Gabriel filter of finite type. 
		Let $\mathcal{I}$ be the collection of all finitely generated ideals from $\mathcal{G}$. For each $I \in \mathcal{G}$, we fix a finite set of generators $\{x_0^I,x_1^I,\ldots,x_{n_I-1}^I\}$. 
The projective presentation 
 $$
	   	\begin{CD}
				& & R^{n_I} @>{}>>R @>>> R/I @>>> 0, \\
		\end{CD}
		$$
induces a projective presentation 
$$ 
	   	\begin{CD}
				& & R @>{\sigma_I}>>R^{n_I} @>>> \Tra (R/I) @>>> 0, \\
		\end{CD}
		$$
where 
 $\sigma_I: R \rightarrow R^{n_I}$ is given by  $\sigma_I(1)=(x_0^I,x_1^I,\ldots,x^I_{n_I-1})$ and $\Tra$ denotes   the Auslander-Bridger transpose of $R/I$
(which is uniquely determined only up to  stable equivalence).
It is easy to check that $\mathcal{D}_{\sigma_I}=\{M \in \ModR \mid M=IM\}$, and thus $\Div{\mathcal{G}}=\bigcap_{I \in \mathcal{I}} \mathcal{D}_{\sigma_I}.$

				Let now $\Lambda$ be the set of all finite sequences of pairs $(I,k)$, with $I \in \mathcal{I}$ and $0 \leq k < n_I$. The set includes the empty sequence denoted by $w$, and it is equipped with the operation of concatenation of sequences, for which we use the symbol $\sqcup$. Let $F$ be the free module on basis $\Lambda$, $F'$ the free module on basis $\Lambda \setminus \{w\}$, and $K$ the free module on basis $\Lambda \times \mathcal{I}$. 		
		
				We define a map $\varphi_\mathcal{G}: K \rightarrow F$ by  its values on the designated basis elements: for any $(\lambda, I) \in \Lambda \times \mathcal{I}$ we set
				$$\varphi_\mathcal{G}((\lambda, I))=\lambda - \sum_{k<n_I}x_k^I (\lambda \sqcup (I, k)).$$
		We also define a map $\varphi'_\mathcal{G}: K \rightarrow F'$ by the commutative diagram
		  $$
	   	\begin{CD}
				& & K @>{\varphi_\mathcal{G}}>> F @>>> C_\mathcal{G} @>>> 0 \\
				& & @| p@VVV {}@VVV \\
			    & & K @>{\varphi'_\mathcal{G}}>> F' @>>> C'_\mathcal{G} @>>>0, \\
		\end{CD}
		$$
		where $p$ denotes the canonical projection $F \rightarrow F'$ killing the coordinate $w$.

		 Let now $P_{-1}=K \oplus K$ and $P_0=F \oplus F'$ and consider 
		 		  $$
	   	\begin{CD}
				& & P_{-1} @>{\sigma_\mathcal{G}}>>P_0 @>>> T_\mathcal{G} @>>> 0, \\
		\end{CD}
		$$
where $\sigma_\mathcal{G}$ is the direct sum of the maps $\varphi_\mathcal{G}$ and $\varphi'_\mathcal{G}$, and $T_\mathcal{G}=C_\mathcal{G} \oplus C'_\mathcal{G}$.
\end{construction}

\begin{proposition}
		\label{P02}
		The module $T_\mathcal{G}$ is a silting module with respect to the map $\sigma_\mathcal{G}$, and $\Gen(T_\mathcal{G})=\Div{\mathcal{G}}$.
\end{proposition}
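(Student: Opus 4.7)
The plan is to establish the two equalities $\mathcal{D}_{\sigma_\mathcal{G}}=\Div{\mathcal{G}}$ and $\Gen T_\mathcal{G}=\Div{\mathcal{G}}$ separately. Since $\sigma_\mathcal{G}$ is the direct sum $\varphi_\mathcal{G}\oplus\varphi'_\mathcal{G}$, one has $\mathcal{D}_{\sigma_\mathcal{G}}=\mathcal{D}_{\varphi_\mathcal{G}}\cap \mathcal{D}_{\varphi'_\mathcal{G}}$, and it will suffice to analyse the two summands one at a time.

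For the inclusion $\mathcal{D}_{\sigma_\mathcal{G}}\subseteq \Div{\mathcal{G}}$, it is enough to work with the second summand: given $M\in \mathcal{D}_{\varphi'_\mathcal{G}}$, $I_0\in \mathcal{I}$ and $m\in M$, I would take $f\colon K\to M$ defined by $f((w,I_0))=m$ and $f=0$ on every other basis vector. A lifting $g'\colon F'\to M$ with $g'\varphi'_\mathcal{G}=f$, evaluated at $(w,I_0)$, yields
$$m=-\sum_{k<n_{I_0}} x_k^{I_0}\,g'((I_0,k))\in I_0 M,$$
which forces $M\in\Div{\mathcal{G}}$. For the converse inclusion I would use a Fuchs--Salce recursion on the length of sequences $\lambda\in\Lambda$: starting from an arbitrary $g(w)\in M$ (which is simply omitted for $\varphi'_\mathcal{G}$, whose defining relations do not involve $g'(w)$), define $g(\lambda\sqcup(I,k))$ for each $(I,k)$ by picking any solution of
$$\sum_{k<n_I} x_k^I\, g(\lambda\sqcup(I,k))=g(\lambda)-f((\lambda,I)),$$
which exists because $g(\lambda)-f((\lambda,I))\in IM$ by $\mathcal{G}$-divisibility, and proceed inductively. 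The resulting $g$ satisfies $g\varphi_\mathcal{G}=f$ by construction, proving $\Div{\mathcal{G}}\subseteq\mathcal{D}_{\varphi_\mathcal{G}}$; the analogous recursion, initialised by solving the $(w,I)$-equations for $g'((I,k))$ first, yields $\Div{\mathcal{G}}\subseteq \mathcal{D}_{\varphi'_\mathcal{G}}$.

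To verify $\Gen T_\mathcal{G}=\Div{\mathcal{G}}$, the inclusion $\Gen T_\mathcal{G}\subseteq\Div{\mathcal{G}}$ reduces to showing $T_\mathcal{G}\in\Div{\mathcal{G}}$: inside $C_\mathcal{G}$ (and likewise in $C'_\mathcal{G}$) the relations $\lambda=\sum_k x_k^I(\lambda\sqcup(I,k))$ place every generator, and hence the module itself, inside $I\cdot C_\mathcal{G}$ for each $I\in\mathcal{I}$. For the reverse inclusion, given $M\in \Div{\mathcal{G}}$ and $m\in M$, I apply the recursion above with $f=0$ and initial value $g(w)=m$; the resulting map $g\colon F\to M$ vanishes on $\im\varphi_\mathcal{G}$ and therefore descends to $\bar g\colon C_\mathcal{G}\to M$ with $m\in\im\bar g$. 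Summing such maps over all $m\in M$ realises $M$ as an epimorphic image of a direct sum of copies of $C_\mathcal{G}$, and in particular of $T_\mathcal{G}$.

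The main technical point will be ensuring that the Fuchs--Salce recursion is well defined. A given node $\lambda\in\Lambda$ appears in the equations indexed by $(\lambda,I)$ for every $I\in\mathcal{I}$ simultaneously, but the children $\lambda\sqcup(I,k)$ for distinct $(I,k)$ are pairwise distinct elements of $\Lambda$, so the tree structure of $\Lambda$ allows the choices at each $I$ to be made independently once $g(\lambda)$ has been fixed. Beyond this combinatorial check, every remaining step is a routine manipulation of generators and relations in free modules.
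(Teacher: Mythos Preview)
Your argument is correct and takes a genuinely different route from the paper. The paper proceeds structurally: it introduces the ideal $A=\sum_{I\in\mathcal{I}}\Ann{I}$, passes to $R/A$ where the ideals become faithful, exhibits an explicit filtration of $C_\mathcal{G}$ and $C'_\mathcal{G}$ by the modules $S_I=\Tra(R/I)\otimes_R R/A$, and then appeals to the Eklof Lemma together with the injectivity of $\overline{\varphi_\mathcal{G}}\colon K/AK\to F/AF$ (which requires a separate verification) to compute the relevant $\Ext$-groups. Your approach bypasses all of this machinery: you construct the lifting $g$ directly by the Fuchs--Salce recursion, using only that $M=IM$ lets you solve $\sum_k x_k^I y_k=c$ at every node. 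The tree structure of $\Lambda$ makes the recursion well-posed exactly as you indicate, and the same recursion with $f=0$ and prescribed $g(w)$ yields the surjection from copies of $C_\mathcal{G}$. What the paper's approach buys is extra structural information---the explicit $\{R/A\}\cup\mathcal{S}$-filtration of $T_\mathcal{G}$ is a statement of independent interest---while your argument is shorter, entirely elementary, and avoids the somewhat delicate linear-independence verification in the paper's Step~V.
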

\begin{proof}
		We divide the proof into several steps. Let us first fix some notation. Let $A=\sum_{I \in \mathcal{I}} \Ann{I}$. Further, for every ideal $I\in\mathcal{I}$,  we define 
		$S_I=\Tra(R/I) \otimes_R R/A$, and we set $\mathcal S=\{S_I\,\mid\, I\in\mathcal{I}\}.$

		{\bf Step I:} Every $I\in\mathcal{I}$ gives rise to a faithful ideal $(I+A)/A$ in the ring $R/A$. In other words, every $r\in R$ satisfying  $rI \subseteq A$ must belong to $A$. 
		
		To see this, use that $I$ is finitely generated to find ideals $I_1,I_2,\ldots,I_l \in \mathcal{I}$ such that $rI \subseteq \sum_{j=1}^l \Ann{I_l}$. Then $r(II_1I_2 \cdots I_l)=0$, and  $r \in \Ann{II_1I_2 \cdots I_l} \subseteq A$.

		{\bf Step II:} An $R/A$-module $M$ satisfies $\Ext{1}{R/A}{S_I}{M}=0$ if and only if $M=IM$.

		Indeed,  the map $\sigma_I: R \rightarrow R^{n_I},\, r\mapsto (r\,x_0^I,r\,x_1^I,\ldots,r\,x_{n_I-1}^I)$
		induces  a commutative diagram with exact rows
		 $$
	   	\begin{CD}
				& & R @>{\sigma_I}>> R^{{n_I}} @>>> \Tra (R/I) @>>> 0 \\
				& & {}@VVV {}@VVV {}@VVV \\
				0 @>>> R/A @>{\overline{\sigma_I}} >> (R/A)^{{n_I}} @>>> S_I @>>>0 \\
		\end{CD}
	   $$
	   because the kernel of $\overline{\sigma_I}$, consisting of the elements $\overline{r}\in R/A$ with $rI\subset A$, is trivial by Step I.
	   It is now an easy observation  that  $\Ext{1}{R/A}{S_I}{M}=0$ if and only if $M=(I+A)/A\cdot M=IM$.
 
		{\bf Step III:} Filtration of $C_\mathcal{G}$ and $C'_\mathcal{G}$.

		For each $n<\omega$ denote by $\Lambda_{n}$ the set of all sequences from $\Lambda$ of length at most  $n$. Let $F_n$ be the span of $\Lambda_{n}$ in $F$, and let $G_n$ be the $\varphi_\mathcal{G}$-image of the span of $\Lambda_{n-1} \times \mathcal{I}$ in $K$. For $n=0$ we have $F_0=R\omega$, and we set $G_0=\emptyset$.  Let $C_n$ be the span of the image of $\Lambda_{n}$ in $C$, that is, $C_n=F_n/(F_n \cap G)$, where $G=\im{\varphi_\mathcal{G}}$.

		We claim that $F_n \cap G=AF_n + G_n$. For any $\lambda \in \Lambda_{n}$ and any $I \in \mathcal{I}$, the element $\varphi_\mathcal{G}((\lambda,I))=\lambda - \sum_{k<n_I}x_k^I (\lambda \sqcup (I,k))$ lies in $G$. Therefore, by multiplying by any $r \in \Ann{I}$, we obtain $r\lambda \in G$. As clearly $G_n \subseteq F_n \cap G$, we have $AF_n + G_n \subseteq F_n \cap G$. 
		
		For the reverse inclusion, let $x \in F_n \cap G$. As $x \in G$, it is of the form 
		$$x=\sum_{j=1}^m r_j\varphi_\mathcal{G}((\lambda_j,I_j))=\sum_{j=1}^mr_j(\lambda_j - \sum_{k<n_{I_j}} x_k^{I_j} (\lambda_j \sqcup (I_j,k)))$$ 
		for some $r_j \in R$, and $(\lambda_j, I_j)$ pairwise distinct elements of $\Lambda \times \mathcal{I}$. We claim that if the length of some $\lambda_j$ exceeds $n-1$, then $r_j \in A$. We prove this claim by backward induction on the length of $\lambda_j$. If $j$ is such that the length of $\lambda_j$ is maximal and exceeding $n$, it is clear from $x \in F_n$ that $r_j \in\Ann{\{x_0^{I_j},x_1^{I_j},\ldots,x_{n_{I_j}-1}^{I_j}\}}=\Ann{I_j} \subseteq A$. Suppose now that $\lambda_j$ is of length $k>n-1$, and that all coefficients $r_{i}$ such that $\lambda_i$ has length $>k$ are in $A$. Then, since $x \in F_n$,  the induction premise yields   $r_jx_k^{I_j} \subseteq A$ for each $k=0,1,\ldots,n_{I^j} -1$. In other words, $r_jI \subseteq A$. By Step I, this implies that $r_j \in A$ as claimed. We proved that the coefficient $r_j$ is in $A$ for any $j$ such that the length of $\lambda_j$ exceeds $n-1$, and thus $x \in AF + G_n$. But since $x \in F_n$, and $AF \cap F_n = AF_n$, we get $x \in AF_n + G_n$ as desired.
		
		It follows that $C_n=F_n/(AF_n + G_n)$. Then $C_0 \simeq R/A$, and $C_{n+1}/C_n \simeq F_{n+1}/(F_n + AF_{n+1} + G_{n+1})$ for any $n \in \omega$. Therefore, $C_{n+1}/C_n \simeq F_{n+1}/(F_n + G_{n+1}) \otimes_R R/A$. The elements $\varphi_\mathcal{G}( (\lambda, I))$, where $\lambda$ is of length $n$, and $I \in \mathcal{I}$ generate $G_{n+1}$ modulo $F_n \cap G_{n+1}$. We obtain that $C_{n+1}/C_n$ is isomorphic to:
		$$\bigoplus_{\lambda \in \Lambda_n \setminus \Lambda_{n-1}} \bigoplus_{I \in \mathcal{I}} (R^{(\lambda \sqcup (I,k) \mid k < n_I)}/(\sum_{k<n_I}x_k^I(\lambda \sqcup (I,k)))R) \otimes_R R/A\simeq \bigoplus_{\lambda \in \Lambda_n \setminus \Lambda_{n-1}} \bigoplus_{I \in \mathcal{I}} S_I.$$
		In particular, $C_\mathcal{G}$ is $\{R/A\} \cup \mathcal{S}$-filtered, and the quotient $C_\mathcal{G}/C_0$, which is clearly isomorphic to $C'_\mathcal{G}$, is $ \mathcal{S}$-filtered.
	
	   {\bf Step IV:} We claim that $\Gen(T_\mathcal{G})=\Div{\mathcal{G}}$.
 		
	   Since $C'_\mathcal{G}=C_\mathcal{G}/C_0$, it is enough to show $\Gen(C_\mathcal{G})=\Div{\mathcal{G}}$. In $C_\mathcal{G}$, the image of any basis element $\lambda$ is identified with the linear combination $\sum_{k < n_I} x_k^I (\lambda \sqcup (I,k))$ with coefficients from $I$, so $C_\mathcal{G} \in \Div{\mathcal{G}}$. Note that this implies that $C_\mathcal{G}$ is an $R/A$-module. For the other inclusion, let $M \in \Div{\mathcal{G}}$. It is clear that $AM=0$, and therefore there is a surjection $\pi: (C_0)^{(\varkappa)} \simeq (R/A)^{(\varkappa)} \rightarrow M$. Since $\Ext{1}{R/A}{S_I}{M}=0$ for every $I \in \mathcal{I}$ by Step II, we have by Step III and by the Eklof Lemma that $\Ext{1}{R/A}{C'_\mathcal{G}}{M}=0$, and thus the $R/A$-homomorphism $\pi$ can be extended to a (surjective) map $C_\mathcal{G}^{(\varkappa)} \rightarrow M$, proving the claim.
	   
	   {\bf Step V:} The map $\varphi_\mathcal{G}$ induces 
	    a commutative diagram with exact rows
		 $$
	   	\begin{CD}
				& & K @>{\varphi_\mathcal{G}}>> F @>>> C_\mathcal{G} @>>> 0 \\
				& & {}@VVV {}@VVV {}@VVV \\
				0 @>>> K/AK @>{\overline{\varphi_\mathcal{G}}} >> F/AF  @>>> C_\mathcal{G} @>>>0 \\
		\end{CD}
	   $$
	   and the analagous result holds for $\varphi'_\mathcal{G}$.
	   	
Indeed, the $R/A$-module  $C_\mathcal{G}$ is the cokernel of  ${\overline{\varphi_\mathcal{G}}}$. Further, since $K/AK$ is a free $R/A$-module with basis 
	    $\Lambda \times \mathcal{I}$,   injectivity of ${\overline{\varphi_\mathcal{G}}}$ amounts  to showing that the elements $\overline{\varphi_\mathcal{G}((\lambda, I))}$ with $(\lambda, I)\in\Lambda \times \mathcal{I}$ form an $R/A$-linearly independent subset in $F/AF$. To this end, we prove in next paragraph that  for each $n \in \omega$
	the elements $\overline{\varphi_\mathcal{G}((\lambda, I))}$  where $\lambda$ has length $n$ form a linearly independent subset in
	the  free $R/A$-module   
	$F_{n+1}/F_n\otimes_R R/A$
	with basis   $\Lambda_{n+1} \setminus \Lambda_{n}$. 
	Then indeed, as $\varphi_\mathcal{G}(\Lambda_{n-1} \times \mathcal{I}) \subseteq F_n$ for each $n>0$, a simple induction argument shows the linear independence of the $\varphi_\mathcal{G}$-image of $\Lambda_n \times \mathcal{I}$ for each $n \in \omega$, and thus of the $\varphi_\mathcal{G}$-image of the whole basis $\Lambda \times \mathcal{I}$. 
	   
	   For any sequence $\lambda \in \Lambda$ of length $n$ and any $I \in \mathcal{I}$, the image of $(\lambda,I)$ in $F_{n+1}/F_n \otimes_R R/A$ is equal to $\sum_{k<n_I}(x_k^I + A)(\lambda \sqcup (I,k))$. As these elements are linear combinations of pairwise disjoint subsets of $\Lambda$, it is clear that their spans are independent in the free $R/A$-module $F \otimes_R R/A$  with basis $\Lambda$. To prove $R/A$-linear independency, it remains to show that these elements have zero annihilator over $R/A$. But that follows from Step I, as $\operatorname{Ann}_{R/A}(\sum_{k<n_I}(x_k^I + A)(\lambda \sqcup (I,k)))=\operatorname{Ann}_{R/A}((I+A)/A)=0$.

	   So ${\overline{\varphi_\mathcal{G}}}$ is injective, and the proof of injectivity of ${\overline{\varphi'_\mathcal{G}}}$ is completely analogous. 

	   {\bf Step VI:} $\mathcal{D}_{\sigma_\mathcal{G}}=\Div{\mathcal{G}}$.

	   Let  $M \in \mathcal{D}_{\sigma_\mathcal{G}}$. We first show that $AM=0$. For any $m \in M$, define map $f: K \rightarrow M$ by setting $f( (\lambda, I) )=m$ for each $(\lambda, I) \in \Lambda \times \mathcal{I}$. As $\mathcal{D}_{\sigma_\mathcal{G}} \subseteq \mathcal{D}_{\varphi'_\mathcal{G}}$, there is a map $g: F' \rightarrow M$ such that $f=g\varphi'_\mathcal{G}$. But $\varphi'_\mathcal{G}( (w, I ) ) = \sum_{k<n_I}x_k^I (I,k)$ is annihilated by $\Ann{I}$. It follows that $\Ann{I}M=0$ for all $I \in \mathcal{I}$, and thus $AM=0$.
 Now, since  $M \in \mathcal{D}_{\sigma_\mathcal{G}}$ also implies $\Ext{1}{R}{C'_\mathcal{G}}{M}=0$ by Lemma~\ref{L00}, we can conclude  as in Step IV that 
	    $M\in \Gen C_\mathcal{G}=\Div{\mathcal{G}}$.

	  Conversely,  let  $M \in \Div{\mathcal{G}}$, and let $f: P_{-1} \rightarrow M$ be a map. By Step V we have a commutative diagram with exact rows
	   $$
	   	\begin{CD}
				& & P_{-1} @>{\sigma_\mathcal{G}}>> P_0 @>>> T_\mathcal{G} @>>> 0 \\
				& & \pi@VVV \psi@VVV @| \\
				0 @>>> P_{-1}/AP_{-1} @>{\overline{\sigma_\mathcal{G}}}>> P_0/AP_0 @>>> T_\mathcal{G} @>>>0, \\
		\end{CD}
	   $$
	   where the vertical maps $\pi$ and $\psi$ are the canonical projections.
	    As $M$ is an $R/A$-module, the map $f$ can be factorized through $\pi$, say $f=f'\pi$. Now $T_\mathcal{G}$ is $\{R/A\} \cup \mathcal{S}$-filtered by Step III, so the Eklof Lemma and Step II imply $\Ext{1}{R/A}{T_\mathcal{G}}{M}=0$. Therefore, there is a map $h: P_0/AP_0 \rightarrow M$ such that $f'=h \,\overline{\sigma_\mathcal{G}}$. Then $f=h\psi\sigma_\mathcal{G}$, proving that $M \in \mathcal{D}_{\sigma_\mathcal{G}}$.
\end{proof}
\begin{theorem}
	\label{T01}
	Let $R$ be a commutative ring. There is a 1-1 correspondence between
	\begin{enumerate}
			\item[(i)] silting classes $\mathcal{D}$ in $\ModR$,
			\item[(ii)] Gabriel filters of finite type $\mathcal{G}$ over $R$.
	\end{enumerate}
	The correspondence is given as follows:
	$$\Theta: \mathcal{G} \mapsto \Div{\mathcal{G}},$$
	$$\Xi: \mathcal{D} \mapsto \{I \subseteq R \mid M=IM \text{ for all } M \in \mathcal{D}\}.$$
\end{theorem}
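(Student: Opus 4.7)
\emph{Plan.} The plan is to establish three facts in order: (a) $\Theta$ is well-defined, (b) $\Theta$ is surjective, and (c) $\Xi(\Theta(\mathcal{G})) = \mathcal{G}$ for every Gabriel filter of finite type $\mathcal{G}$. Together these imply that $\Theta$ is a bijection and that $\Xi$ is its two-sided inverse (and, in particular, that $\Xi$ takes values in Gabriel filters of finite type). Item (a) is Proposition~\ref{P02}. Surjectivity in (b) follows by combining Theorem~\ref{ft} (any silting class equals $\mathcal{D}_\sigma$ for some $\sigma \in \Mor{\ProjR}$ of finite type) with Proposition~\ref{P01} (any such $\mathcal{D}_\sigma$ equals $\Div{\mathcal{G}}$ for some Gabriel filter of finite type $\mathcal{G}$).

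For (c), the inclusion $\mathcal{G} \subseteq \Xi(\Div{\mathcal{G}})$ is tautological from the definitions, and the main obstacle is the reverse inclusion. I will argue contrapositively: given $I \notin \mathcal{G}$, the aim is to exhibit a $\mathcal{G}$-divisible module $M$ on which $I$ acts non-trivially. The bijection between Gabriel filters of finite type and hereditary torsion pairs of finite type recorded in Reminder~\ref{reminder} forces $R/I \notin \mathcal{T}_\mathcal{G}$, so there exists an ideal $J$ with $I \subseteq J \subsetneq R$ and $R/J \in \mathcal{F}_\mathcal{G}$.

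The candidate module is the character dual $M = (R/J)^+$. Since $\mathcal{G}$ is of finite type, Reminder~\ref{reminder} identifies $\Div{\mathcal{G}}$ and $\mathcal{F}_\mathcal{G}$ as dual definable categories, so the condition $R/J \in \mathcal{F}_\mathcal{G}$ forces $M \in \Div{\mathcal{G}}$. Should $IM = M$ hold, equivalently $R/I \otimes_R M = 0$, Hom--$\otimes$ adjunction would yield $\Hom{R}{R/I}{(R/J)^{++}} = 0$, and via the canonical embedding $R/J \hookrightarrow (R/J)^{++}$ this would imply $\Hom{R}{R/I}{R/J} = 0$. But the latter Hom is nonzero because $I \subseteq J \subsetneq R$ makes the projection $R/I \twoheadrightarrow R/J$ a nonzero $R$-linear map. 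Hence $IM \neq M$, and $I \notin \Xi(\Div{\mathcal{G}})$, completing the argument.
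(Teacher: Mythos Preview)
Your proof is correct and follows essentially the same route as the paper. The paper argues that both $\Theta$ and $\Xi$ are well defined (via Propositions~\ref{P01} and~\ref{P02}), that $\Theta(\Xi(\mathcal{D}))=\mathcal{D}$ (from Proposition~\ref{P01}), and that $\Xi(\Theta(\mathcal{G}))=\mathcal{G}$ by citing \cite[Chapter VI, Theorem~5.1]{St} together with character duality; you reorganize this as ``$\Theta$ well defined, $\Theta$ surjective, $\Xi\circ\Theta=\mathrm{id}$'', which is logically equivalent, and your explicit construction of $M=(R/J)^+$ is precisely the character-duality step the paper leaves implicit.
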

\begin{proof}
		By 
		Proposition~\ref{P01} and Proposition~\ref{P02}, both maps of the correspondence are well defined. By Proposition~\ref{P01}, it is clear that $\Theta(\Xi(\mathcal{D}))=\mathcal{D}$ for any silting class $\mathcal{D}$. That $\Xi(\Theta(\mathcal{G}))=\mathcal{G}$ for any Gabriel topology of finite type follows from \cite[Chapter VI, Theorem 5.1]{St}, and by character duality.
\end{proof}
In \cite{SP}, it was asked whether any tilting torsion pair $(\mathcal{T},\mathcal{F})$ of finite type is classical (that is, $\mathcal{T}$ is generated by a finitely presented tilting module). The answer turned out to be negative for general rings, but positive for commutative rings (\cite{BHPST}). We remind that for commutative rings, this means that $\mathcal{F}$ is closed under direct limits if and only if $\mathcal{F}=\{0\}$. We conclude this section with a generalization of this phenomenon for silting classes. 
\begin{proposition}
		\label{P11}
		Let $R$ be a commutative ring, $T$ a silting $R$-module, and $(\mathcal{D}, \mathcal{F}) = (\Gen(T), \Ker \Hom{R}{T}{-})$ the associated torsion pair. The following are equivalent:
	\begin{enumerate}
		\item[(i)] $T$ is projective,
		\item[(ii)] there is a finitely presented silting $R$-module generating $\mathcal{D}$,
		\item[(iii)] $\mathcal{F}$ is closed under direct limits,
		\item[(iv)] $\mathcal{D} = \Gen(Re)$ for a (central) idempotent $e \in R$.
	\end{enumerate}
\end{proposition}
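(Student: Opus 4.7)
The implications $(\mathrm{iv})\Rightarrow(\mathrm{i}),(\mathrm{ii}),(\mathrm{iii})$ are handled by taking $T = Re$: this module is finitely presented and projective, and silting with respect to the zero morphism $\sigma\colon R(1-e)\to Re$, since $\Coker\sigma = Re$ and $\mathcal{D}_\sigma = \{M \mid eM = M\} = \Gen(Re)$. As $\Hom{R}{Re}{-}$ commutes with direct limits, $\Ker\Hom{R}{Re}{-}$ is closed under them. For an arbitrary silting $T$ with $\Gen T = \Gen(Re)$, the standard equivalence of silting modules with the same silting class \cite{AMV} forces $\Add T = \Add(Re)$, so $T$ is projective. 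Likewise $(\mathrm{ii})\Rightarrow(\mathrm{iii})$ is immediate, since $\Hom{R}{T'}{-}$ commutes with direct limits for finitely presented $T'$.

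For $(\mathrm{i})\Rightarrow(\mathrm{iv})$, a projective presentation $\sigma\colon P_{-1}\to P_0\to T\to 0$ splits (via projectivity of $T$ and then of $K = \im\sigma$) as $P_0 = T\oplus K$ and $P_{-1} = K\oplus K'$, with $\sigma$ the identity on $K$ and zero on $K'$. A direct calculation yields $\mathcal{D}_\sigma = \{M \mid \tau(K')\,M = 0\}$ and, by the dual-basis lemma applied to the projective $T$, $\Gen T = \{M \mid \tau(T)\,M = M\}$, where $\tau$ denotes the trace ideal. Both trace ideals $J = \tau(T)$ and $J' = \tau(K')$ are idempotent in $R$. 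Testing the identity $\{JM = M\} = \{J'M = 0\}$ on $M = R/J'$ yields $J + J' = R$; testing on $M = JJ'$, where the computation $JM = J^2 J' = JJ' = M$ places $M$ in $\mathcal{D}$ (forcing $J'M = 0$) while also $J'M = J(J')^2 = JJ' = M$, forces $M = JJ' = 0$. Writing $1 = e + f$ with $e\in J$ and $f\in J'$, the relation $ef \in JJ' = 0$ gives $e^2 = e(1-f) = e$; any $x\in J$ then satisfies $x = xe + xf = xe \in Re$, whence $J = Re$ and $\mathcal{D} = \Gen(Re)$.

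The main obstacle is $(\mathrm{iii})\Rightarrow(\mathrm{iv})$. My plan is to reduce to the tilting situation treated in \cite{BHPST}. By Theorem~\ref{T01}, $\mathcal{D} = \Div{\mathcal{G}}$ for a Gabriel filter $\mathcal{G}$ of finite type; set $A = \Ann{\mathcal{D}}$. Every module in $\mathcal{D}$ is an $R/A$-module, and inspecting Construction~\ref{CC00} shows that $T_\mathcal{G}$ becomes a \emph{tilting} $R/A$-module whose tilting class, viewed inside $\rmod{(R/A)}$, equals $\mathcal{D}$. Hypothesis (iii) transfers to the finite-type condition for this tilting torsion pair over the commutative ring $R/A$, so \cite{BHPST} forces the tilting class to coincide with all of $\rmod{(R/A)}$: $\mathcal{D} = \{M \in \ModR \mid AM = 0\}$. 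Closure of $\mathcal{D}$ under extensions then gives $A^2 = A$. Moreover $\mathcal{F}$ coincides with $\Ker\Hom{R}{R/A}{-} = \{M \mid A\text{ acts injectively on }M\}$, and closure of this class under direct limits is equivalent to $R/A$ being finitely presented, i.e., to $A$ being finitely generated. The standard Nakayama-type argument for commutative rings (every finitely generated idempotent ideal is of the form $Rf$ for an idempotent $f$) then yields $A = R(1-e)$, and $\mathcal{D} = \{M \mid (1-e)M = 0\} = \Gen(Re)$.
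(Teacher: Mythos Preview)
Your proof is correct, and in several places takes a different—often more elementary—route than the paper.

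For $(\mathrm{iv})\Rightarrow(\mathrm{i}),(\mathrm{ii}),(\mathrm{iii})$ and for $(\mathrm{ii})\Rightarrow(\mathrm{iii})$ your arguments are slight variants of the paper's: the paper uses a presentation $R\to Re\oplus Re$ for $Re$ rather than your zero map $R(1-e)\to Re$, and for (ii) it passes through (iv) via the tilting reduction over $R/A$, whereas your one-line observation that $\Hom{R}{T'}{-}$ preserves direct limits for finitely presented $T'$ is more direct.

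For $(\mathrm{i})\Rightarrow(\mathrm{iv})$ the paper reduces modulo $A=\Ann{T}$: since $T$ is a projective tilting module over $R/A$, the tilting class over $R/A$ is everything, so $R/A\in\Add T$ is projective over $R$ and hence generated by an idempotent. Your trace-ideal argument is genuinely different and avoids any appeal to tilting theory or to \cite{AMV}: it stays inside $\ModR$ and extracts the idempotent from the orthogonality relations $J+J'=R$, $JJ'=0$ between the trace ideals of $T$ and of $\Ker\sigma$. This is a nice self-contained alternative.

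For $(\mathrm{iii})\Rightarrow(\mathrm{iv})$, both you and the paper reduce via \cite{BHPST} over $R/A$ to obtain $\mathcal{D}=\rmod{R/A}$ and $A=A^2$. The one point that needs more care is your assertion that closure of $\mathcal{F}=\Ker \Hom{R}{R/A}{-}$ under direct limits is \emph{equivalent} to $A$ being finitely generated; this is true here but not a textbook one-liner, so you should justify it. One route: since $A$ is idempotent, $(\mathcal{D},\mathcal{F})$ is a hereditary torsion pair with Gabriel filter $\{I\mid A\subseteq I\}$, and by Reminder~\ref{reminder} closure of $\mathcal{F}$ under direct limits is equivalent to this filter having a basis of finitely generated ideals---which for this particular filter forces $A$ itself to be finitely generated. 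The paper instead argues directly: writing $A=\varinjlim_{j} I_j$ over its finitely generated subideals and passing to the $\mathcal{F}$-torsionfree quotients $R/K_j$ of $R/I_j$, closure of $\mathcal{F}$ under direct limits yields $\bigcup_j K_j=R$, whence some $R/I_j\in\mathcal{D}$ and hence $I_j=A$. Either route works; just make yours explicit.
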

\begin{proof}
		Denote $A=\Ann{T}=\Ann{\mathcal{D}}$. By \cite[Proposition 3.13 and Lemma 3.4]{AMV}, $T$ is a tilting $R/A$-module.
Moreover, it is easy to check that $R/A\in\mathcal{D}$ if and only if $\mathcal{D}=\rmod{R/A}$, or equivalently, $\mathcal{D}=\Gen{R/A}$. In this case,  $A$ is  an idempotent ideal with $\mathcal{D}=\Ker \Hom{R}{A}{-}\subset (R/A)^\perp$, cf.~ \cite[Proposition 2.5]{AMV2}.

		(i) $\rightarrow$ (iv): As $T$ is also projective as an $R/A$-module, $\mathcal{D}=\Ker \Ext{1}{R/A}{T}{-}=\rmod{R/A}$. Then $R/A \in \Add(T)$ 
		is a projective $R$-module. Hence, $R/A=Re$ for an idempotent $e \in R$, and (iv) follows.
		
		(ii) $\rightarrow$ (iv): Let $T'$ be a finitely presented silting module such that $\Gen(T')=\mathcal{D}$. Then $T'$ is a finitely presented tilting $R/A$-module, which is projective by \cite[Proposition 13.2]{GT}). Hence, $\mathcal{D}=\rmod{R/A}$, and $A$ is an idempotent ideal. Also,  $R/A \in \Add(T')$ is finitely presented, and thus $A$ is finitely generated. 		
It follows from \cite[Proposition 1.10(i)]{FS} that $A=Rf$ for some idempotent $f \in R$, and thus $R/A=R(1-f)$, proving (iv).

		(iii) $\rightarrow$ (iv): Consider the (tilting) torsion pair $(\mathcal{D}, \mathcal{F}')$ in $\rmod{R/A}$, where $\mathcal{F}'=\Ker \Hom{R/A}{T}{-}$. Then $\mathcal{F}'$ is closed under direct limits, and thus $\mathcal{D}=\rmod{R/A}$ by \cite{BHPST} or \cite[Theorem 4.6]{H}. In particular, $A$ is an idempotent ideal. 
		
		We claim that $A$ is finitely generated. Let us write $A$ as a direct union of its finitely generated subideals, $A=\varinjlim_{j \in J} I_j$. Denote by $K_j$ the ideal such that $R/K_j$ is the torsion-free quotient of $R/I_j$ with respect to the torsion pair $(\mathcal{D},\mathcal{F})$. Then $K_i \subseteq K_j$ whenever $i \leq j \in J$. Since $\mathcal{F}$ is closed under direct limits, we have that $\varinjlim_{j \in J} R/K_j = R/\bigcup_{j \in J} K_j$ is in $\mathcal{F}$, and thus zero, because $R/A \in \mathcal{D}$, and $A \subseteq \bigcup_{j \in J}K_j$. It follows that there is $j \in J$ such that $R=K_j$, and therefore $R/I_j \in \mathcal{D}$. But $\mathcal{D} = \rmod{R/A}$, and $I_j \subseteq A$, which forces $I_j=A$.

		We now conclude this implication as in (ii) $\rightarrow$ (iv).
		
		(iv) $\rightarrow$ (i), (ii), (iii): As $\mathcal{F} = \Gen(R(1-e))$, condition (iii) is clear. Consider the map $\sigma: R \rightarrow Re \oplus Re$ given by  the canonical projection of $R$ onto the first direct summand $Re$. Then 
		$\Ker(\sigma)=R(1-e)$, and clearly $\mathcal{D}_\sigma = \Ker \Hom{R}{R(1-e)}{-} = \Gen(Re)$. Hence, $\Coker(\sigma) = Re$ is a silting module generating $\Gen(Re)$. This proves (ii). Finally, $T \in \Add{Re}$, and thus $T$ is projective.
\end{proof}
The following example shows that, in contrast with tilting modules over commutative rings, we cannot replace ``finitely presented'' by ``finitely generated''  in Proposition~\ref{P11}(ii).
\begin{example}
		Let $k$ be a field, $\varkappa$ an infinite cardinal, and $R=k^\varkappa$. Consider the Gabriel filter $\mathcal{G}$ over $R$ with basis consisting of all principal ideals generated by elements of $k^\varkappa$, such that their support is cofinite in $\varkappa$. Let $\mathcal{D}=\Div{\mathcal{G}}$ be the associated silting class. Then $A=\Ann{\mathcal{D}}=\sum_{I \in \mathcal{G}}\Ann{I}$ is equal to $k^{(\varkappa)} \subseteq R$. Because $A+I=R$ for any $I \in \mathcal{G}$, we have that $R/A \in \mathcal{D}$, and therefore $\mathcal{D} = \Gen{R/A}=\Ker \Hom{R}{A}{-}\subset (R/A)^\perp$ (cf.~the proof of Proposition \ref{P11}). We claim that $R/A$ is a silting module.

		For each $a \in \varkappa$, consider the idempotent $e_a\in R$ with $a$-th component equal to $1$, and all other components equal to zero. Taking the direct sum of the split exact sequences $0\to e_aR\to R\to (1-e_a)R
		\to 0$, we obtain a split exact sequence $0\to A\stackrel{\iota}{\rightarrow} R\stackrel{\pi}{\rightarrow}\bigoplus_{a \in \varkappa}(1-e_a)R\to 0$, where $\mathcal{D}_\pi = \Ker \Hom{R}{A}{-}=\mathcal{D}\subset (R/A)^\perp$.
The map $\sigma = \iota \oplus \pi\in\Mor{\ProjR}$ then satisfies $\Coker(\sigma)=\Coker(\iota)=R/A$, and as $\iota$ is monic, $\mathcal{D}_\sigma=(R/A)^\perp \cap \mathcal{D}_{\pi}=\mathcal{D}_\pi=\Gen R/A$.	
		We proved that $R/A$ is silting.
		
		Finally, note that $R/A$ is not finitely presented, and thus not projective.
\end{example}
\section{Cosilting modules over commutative rings}

If $R$ is a commutative noetherian ring, then all Gabriel filters and all hereditary torsion pairs are of finite type, and they correspond bijectively to subsets of $\Spec{R}$ closed under specialization. Recall that a subset $P\subset\Spec{R}$ is {\it closed under specialization} if $\mathfrak p\in P$ implies that all prime ideals $\mathfrak q\supset\mathfrak p$ belong to $P$.  
Such  $P$   gives rise to  a  hereditary torsion pair   $(\mathcal T(P),\mathcal F(P))$  where
$\mathcal F(P)=\{M\in \ModR\mid \Hom{R}{R/p}{M}=0\text{ for\ all }p\in P\}$, and the assignment $P\mapsto (\mathcal T(P),\mathcal F(P))$ defines the stated bijection.
For details we refer to \cite[Chapter VI, \S 6.6]{St}.

\begin{theorem} \label{classcosilting}
If $R$ is a commutative noetherian ring, there are bijections between
\begin{ii}
\item silting classes $\mathcal{D}$ in $\ModR$,
\item subsets $P \subseteq \Spec R$ closed under specialization,
\item Gabriel filters $\mathcal{G}$ over $R$,
\item cosilting classes $\C$ in $\rmod R$.
\end{ii}
In particular, every cosilting class is of cofinite type.
\end{theorem}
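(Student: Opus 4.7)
The plan is to assemble the theorem from three bijections already established in the paper, plus the classical Gabriel correspondence over commutative noetherian rings, using the noetherian hypothesis only to discard the various ``finite type'' qualifications. First, (i)$\leftrightarrow$(iii) is a direct consequence of Theorem~\ref{T01}: that theorem gives a bijection between silting classes over an arbitrary commutative ring and Gabriel filters of finite type, and under the noetherian hypothesis every ideal of $R$ is finitely generated, so every Gabriel filter automatically admits a basis of finitely generated ideals and is of finite type. Thus the finite type qualifier becomes vacuous, and Theorem~\ref{T01} directly gives (i)$\leftrightarrow$(iii).

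Next, the bijection (ii)$\leftrightarrow$(iii) is the classical commutative noetherian version of the Gabriel correspondence recalled at the beginning of the section: a specialization-closed subset $P\subseteq\Spec R$ corresponds to the hereditary torsion pair $(\mathcal{T}(P),\mathcal{F}(P))$, and equivalently to the Gabriel filter generated by $\{I\subseteq R\mid V(I)\subseteq P\}$, as recorded in \cite[Chapter VI, \S 6.6]{St}. No new argument is required here.

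Finally, for (i)$\leftrightarrow$(iv) together with the concluding ``in particular'' assertion I would invoke Corollary~\ref{coftnoe}. Since $R$ is commutative noetherian it is, in particular, left noetherian, so by Corollary~\ref{coftnoe} every cosilting class in $\RMod$ is already of cofinite type, and the assignment $\Gen T\mapsto\Cogen T^+$ is a bijection between silting classes in $\ModR$ and cosilting classes in $\RMod$; commutativity identifies $\ModR$ with $\RMod$. Composing the three bijections yields the theorem, and the last sentence is exactly the cofinite type half of Corollary~\ref{coftnoe}. There is no real obstacle in this proof: the noetherian hypothesis precisely erases the ``finite type'' restrictions appearing in Theorem~\ref{T01} and Corollary~\ref{coftnoe}, so the bijections fit together without further work.
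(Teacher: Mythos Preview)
Your proposal is correct and matches the paper's own proof, which simply reads ``Apply Corollary~\ref{coftnoe} and Theorem~\ref{T01}''; you have merely unpacked this by making explicit the bijection (ii)$\leftrightarrow$(iii) recalled just before the theorem and the observation that the noetherian hypothesis renders the finite-type qualifiers vacuous.
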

\begin{proof} 
Apply Corollary~\ref{coftnoe} and Theorem~\ref{T01}.
\end{proof}

Next, we provide a  construction for a cosilting module
cogenerating the $\mathcal{G}$-torsionfree modules for a given Gabriel filter $\mathcal{G}$. It is inspired by the construction of cotilting modules over commutative noetherian rings  in \cite{HST}.
\begin{construction}\label{coscon}
		Let $R$ be commutative, and $\mathcal{G}$ be a Gabriel filter of finite type. Let $(\mathcal{T}_\mathcal{G}, \mathcal{F}_\mathcal{G})$ be the associated hereditary torsion pair from \ref{reminder}, that is, $\mathcal{F}_\mathcal{G}=\bigcap_{I \in \mathcal{G}}\Ker \Hom{R}{R/I}{-}$, and $\mathcal{T}_\mathcal{G}$ consists of the modules $M$ for which every element  $ m \in M$ is annihilated by some $I \in \mathcal{G}$. Let us construct a cosilting module $C_\mathcal{G}$ such that $\Cogen(C_\mathcal{G})=\mathcal{F}_\mathcal{G}$.

First, since $\mathcal{F}_\mathcal{G}$ is a hereditary torsion-free class, there is an injective module $E$ with $\Cogen(E)=\mathcal{F}_\mathcal{G}$. Indeed, we can put $E=\prod\{E(R/J) \mid R/J \in \mathcal{F}_\mathcal{G}\}$. Then $E$ is injective, $E \in \mathcal{F}_\mathcal{G}$, and any module from $\mathcal{F}_\mathcal{G}$ is easily seen to be cogenerated by $E$. 

Next, we let $E_1=\prod\{E(R/I) \mid I \in \mathcal{G}\}$. Since $\mathcal{G}$ is of finite type, $\mathcal{F}_\mathcal{G}$ is definable, and thus a precovering class. Let $f: F \rightarrow E_1$ be a $\mathcal{F}_\mathcal{G}$-precover of $E_1$. Since $E_1$ is injective, we can extend $f$ to a map $\bar{f}: E(F) \rightarrow E_1$. As $E(F) \in \mathcal{F}_\mathcal{G}$, the map $\bar{f}$ is also an $\mathcal{F}_\mathcal{G}$-precover of $E_1$. Then $E_0=  E \oplus E(F) $ is an injective module in $\mathcal{F}_\mathcal{G}$. Denote by $k: K \rightarrow E(F)$  the kernel of $\bar{f}$, and consider the following exact sequence:
$$0 \rightarrow E \oplus K \xrightarrow{\begin{pmatrix} 1_E & 0 \\ 0 & k \end{pmatrix}} E_0\xrightarrow{\begin{pmatrix}0 & \bar{f}\end{pmatrix}} E_1.$$
We claim that $C_\mathcal{G}=E \oplus K$ is a cosilting module with respect to the map $\lambda=\begin{pmatrix}0 & \bar{f}\end{pmatrix}$, and $\Cogen(C_\mathcal{G})=\mathcal{F}_\mathcal{G}$. 

Since $\Cogen(E)=\mathcal{F}_\mathcal{G}$, and $K$ is isomorphic to a submodule of $E(F) \in \mathcal{F}_\mathcal{G}$, we have $\Cogen(C_\mathcal{G})=\mathcal{F}_\mathcal{G}$. 
Further, if $M \in \mathcal{F}_\mathcal{G}$, then any map $g: M \rightarrow E_1$ factors through the  $\mathcal{F}_\mathcal{G}$-precover $\bar{f}$ of $E_1$, so there is $h: M \rightarrow E(F)$ such that  $g=\bar{f}h=\lambda\begin{pmatrix}0 \\ h\end{pmatrix}$, and thus $M \in \mathcal{C}_\lambda$. This proves that $\mathcal{F}_\mathcal{G}\subseteq\mathcal{C}_\lambda$.

		Let now $M$ be an $R$-module such that the $\mathcal{T}_\mathcal{G}$-torsion part $M'$ of $M$ is non-zero. Choose any non-zero cyclic submodule $R/I$ of $M'$. As necessarily $I \in \mathcal{G}$, there is a non-zero map $g: R/I \rightarrow E_1$, which extends to $\bar{g}: M \rightarrow E_1$. Suppose that there is $h: M \rightarrow E_0 $ such that $\bar{g}=\lambda h$. Then $h_{\restriction M'}$ is a non-zero map $M' \rightarrow E_0$ with $E_0 \in \mathcal{F}_\mathcal{G}$, a contradiction. Therefore, $M \not\in \mathcal{C}_\lambda$. We have $\mathcal{F}_\mathcal{G}=\mathcal{C}_\lambda$ as desired.
\end{construction}

\begin{corollary}
Let $R$ be a commutative ring. With the notation of Constructions \ref{CC00} and \ref{coscon}, $\{T_\mathcal{G} \,\mid\, \mathcal{G} \text{ a  Gabriel filter of finite type }\}$ is a set of representatives, up to equivalence, of all silting $R$-modules, and $\{C_\mathcal{G} \,\mid\, \mathcal{G} \text{ a  Gabriel filter of finite type }\}$ is  a set of representatives, up to equivalence, of all cosilting $R$-modules of cofinite type. 
\end{corollary}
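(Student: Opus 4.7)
My plan is to read off the corollary by assembling the classification theorems already in hand. Two silting (respectively cosilting) modules are called equivalent exactly when they determine the same class $\Gen T$ (respectively $\Cogen C$), so it suffices to show that every silting class arises as $\Gen T_\mathcal{G}$ for a unique Gabriel filter of finite type $\mathcal{G}$, and analogously that every cosilting class of cofinite type arises as $\Cogen C_\mathcal{G}$ for a unique such $\mathcal{G}$.

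For the silting half, I would start with an arbitrary silting $R$-module $T$ and invoke Theorem~\ref{T01} to obtain the unique Gabriel filter of finite type $\mathcal{G}$ with $\Gen T=\Div{\mathcal{G}}$. Proposition~\ref{P02} supplies the matching identity $\Gen T_\mathcal{G}=\Div{\mathcal{G}}$, so $T$ and $T_\mathcal{G}$ are equivalent. The same bijection guarantees that $\mathcal{G}\mapsto\Div{\mathcal{G}}$ is injective, and therefore the modules $T_\mathcal{G}$ are pairwise inequivalent as $\mathcal{G}$ ranges over Gabriel filters of finite type.

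For the cosilting half, I would take a cosilting module $C$ of cofinite type. Its class $\Cogen C$ is a definable torsionfree class by \cite{BP}, and because it is of cofinite type Lemma~\ref{L35} identifies it with $\mathcal{F}_\mathcal{G}$ for a uniquely determined Gabriel filter of finite type $\mathcal{G}$ (via the standard bijection in Reminder~\ref{reminder}). Construction~\ref{coscon} then delivers $\Cogen C_\mathcal{G}=\mathcal{F}_\mathcal{G}$, so $C$ and $C_\mathcal{G}$ are equivalent; injectivity of $\mathcal{G}\mapsto\mathcal{F}_\mathcal{G}$ again distinguishes the different $C_\mathcal{G}$'s. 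Corollary~\ref{hercos} verifies that each $\mathcal{F}_\mathcal{G}$ is itself a cosilting class of cofinite type, so the family $\{C_\mathcal{G}\}$ stays inside the intended target set. I expect no real obstacle beyond this bookkeeping; the only subtle point is confirming that the cofinite-type hypothesis on $C$ is precisely what activates Lemma~\ref{L35} in the non-noetherian commutative setting, and that the standard convention identifying equivalence of silting/cosilting modules with equality of their associated classes is in force throughout.
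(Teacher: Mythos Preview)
Your proposal is correct and mirrors exactly the argument the paper leaves implicit: the corollary is stated without proof because it follows immediately by combining Theorem~\ref{T01} with Proposition~\ref{P02} on the silting side, and Lemma~\ref{L35}, Reminder~\ref{reminder}, Corollary~\ref{hercos}, and Construction~\ref{coscon} on the cosilting side, precisely as you outline.
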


We close this note with an example  of a cosilting module  which is not of cofinite type. The same  module is also an example for a finendo quasitilting module which is not silting. 
Recall that all silting modules are finendo quasitilting  \cite[Proposition 3.10]{AMV}. 

\begin{example}
		\label{E00}
		Let $R$ be a commutative local ring with a non-zero idempotent maximal ideal $\mathfrak{m}$ (e.g.~any valuation domain with non-zero idempotent radical, such as the ring of Puiseux series over a field). 
We consider the module $R/\mathfrak{m}$.

Since $\mathfrak{m}$ is idempotent, the class $\mathcal{C}=\Gen(R/\mathfrak{m})=\Add(R/\mathfrak{m})$ is a torsion class
contained in $(R/\mathfrak{m})^\perp$. The natural projection $R \rightarrow R/\mathfrak{m}$ is easily seen to be a $\mathcal{C}$-preenvelope. The cokernel of this map is zero, and  \cite[Proposition 3.2]{AMV} shows that $R/\mathfrak{m}$ is a finendo quasitilting module. On the other hand, $\mathcal{C}$ is not silting by Theorem~\ref{T01}. Indeed, the only ideal $R/\mathfrak{m}$ is divisible by is $R$. But $\mathcal{C} \neq \ModR$, because $\mathfrak{m} \not\in \mathcal{C}$, as $\mathfrak{m}^2=\mathfrak{m} \neq 0$. 

The same class  $\mathcal{C}$ is a cosilting class not of cofinite type. Indeed, $\mathcal{C}$ is  closed for direct products, and thus it coincides with $\Cogen(R/\mathfrak{m})$.
We prove that
$R/\mathfrak{m}$ is a cosilting module.
Let $0 \rightarrow R/\mathfrak{m} \rightarrow E_0 \xrightarrow{\varphi} E_1$ be the begining of the minimal injective coresolution of $R/\mathfrak{m}$. Define an injective module $E=\prod \{E(R/J) \mid J \subseteq R \text{ such that } \Soc{R/J}=0\}$. Let $\sigma: E_0 \rightarrow E_1 \oplus E$ be the direct sum of $\varphi$ and the zero map $0 \rightarrow E$. We prove that $\mathcal{C}_\sigma=\mathcal{C}$.

		Note that the image of any map $f: R/\mathfrak{m} \rightarrow E_1 \oplus E$ is contained in $E_1$  by the definition of $E$. By the essentiality of the image of $\varphi$ in $E_1$, $f$ is actually a map $R/\mathfrak{m} \rightarrow \im{\varphi}$. Since $\Ext{1}{R}{R/\mathfrak{m}}{R/\mathfrak{m}}=0$ by the idempotency of $\mathfrak{m}$, we have that $R/\mathfrak{m} \in \mathcal{C}_\sigma$, and thus $\mathcal{C} \subseteq \mathcal{C}_\sigma$.

		Let now $M \in \ModR$ be such that $\mathfrak{m}M \neq 0$. Then $M$ contains a cyclic submodule $R/I$ with $\mathfrak{m} \not\subseteq I$. Using injectivity, it is enough to show that $R/I \not\in \mathcal{C}_\sigma$. If $\Soc{R/I}=0$, then $R/I$ injects into $E$, and this injection clearly cannot be factorized through $\sigma$. If $\Soc{R/I}\neq 0$, let $J$ be an ideal such that $(R/I)/\Soc{R/I} \simeq R/J$. Then $J \neq R$, because in such case $\Soc{R/I}=R/I$, implying that $\Ann{R/I}=\mathfrak{m}$, and thus $R/I=R/\mathfrak{m}$, which is not the case. If $\Soc{R/J} \neq 0$, the full preimage of this socle in $R/I$ would be a non-trivial extension of two semisimple modules, which does not exist by idempotency of $\mathfrak{m}$. Hence, $\Soc{R/J}=0$, and the composition of the projection $R/I \rightarrow R/J$ with inclusion $R/J \rightarrow E$ is a non-zero map $R/I \rightarrow E$. Again, this map cannot be factorized through $\sigma$. Hence, $R/I \not\in \mathcal{C}_\sigma$, and $\mathcal{C}_\sigma=\Cogen(R/\mathfrak{m})$.

		Finally, the class $\mathcal{C}$ is not of cofinite type. Indeed, the only injective the class $\mathcal{C}$ contains is zero, and thus it is not of cofinite type by Lemma~\ref{L35}.		
\end{example}

\end{document}